\newtheorem{theorem}{Theorem}[section] 
\newtheorem{proposition}[theorem]{Proposition}
\newtheorem{example}[theorem]{Example}
\newtheorem{lemma}[theorem]{Lemma} 
\newtheorem{remark}[theorem]{Remark}
\theoremstyle{definition}
\newtheorem{definition}[theorem]{Definition}
\numberwithin{equation}{section}
\newcommand{\C}{\mathbb C} 
\newcommand{\R}{\mathbb R} 
\newcommand{\N}{\mathbb N} 
\newcommand{\B}{\mathbb B} 
\newcommand{\D}{\mathbb D}
\begin{document} 
\begin{abstract}
We present different constructions of abstract boundaries for bounded complete (Kobayashi) hyperbolic domains in $\C^d$, $d \geq 1$. These constructions essentially come from the geometric theory of metric spaces. We also present, as an application, some extension results concerning biholomorphic maps.
\end{abstract} 

\title{Abstract boundaries and continuous  extension of biholomorphisms}

\dedicatory{To Professor L\'aszl\'o Lempert for his 70th birthday}

\author[F. Bracci \and H. Gaussier]{Filippo Bracci$^1$ and Herv\'e Gaussier$^2$}

\address{\begin{tabular}{lll}
 Filippo Bracci & & Herv\'e Gaussier\\
Dipartimento di Matematica & & Univ. Grenoble Alpes, IF, F-38000 Grenoble, France\\
Universit\`a di Roma ``Tor Vergata''& & CNRS, IF, F-38000 Grenoble, France\\
Via della Ricerca Scientifica, 1& &
 \\
00133 Roma& & 
\\
Italy & &
\end{tabular}
}
\email{fbracci@mat.uniroma2.it \ herve.gaussier@univ-grenoble-alpes.fr } 
%\subjclass[2010]{}
\keywords{}

\thanks{$^1$Partially supported by PRIN 2017 Real and Complex Manifolds: Topology, Geometry and holomorphic
dynamics, Ref: 2017JZ2SW5, by GNSAGA of INdAM and by the MIUR Excellence Department Project awarded
to the Department of Mathematics, University of Rome Tor Vergata, CUP E83C18000100006.}
\thanks{$^2$Partially supported by ERC ALKAGE}
\maketitle 

\tableofcontents

\section{Introduction}
Extensions of biholomorphisms between domains is a classical subject which has been attacked and studied by several authors with different techniques. It is virtually impossible to name all authors which contributed to the subject, and we limit ourselves in citing the surveys papers \cite{PSS, Fo, Fo1} and references therein. In some cases, the problem of extension  is dealt with  techniques of  CR geometry and asymptotic expansions of Bergman kernels or other invariant metrics, like the amazing result of Ch. Fefferman \cite{Fe}. In some other cases, extension results are obtained by ``scaling techniques'', essentially introduced in the field of Several Complex Variables by S. Pinchuk \cite{Pi} (see also the survey articles \cite{Be, PSS} and the references therein). In some instances, continuous extension is related to the so-called ``Julia-Wolff-Carath\'eodory type theorems'' (see, {\sl e.g.}, \cite{Aba, BCD, BF}). In most cases, the results are of local nature  and the extension is provided at points which satisfy good conditions (see, {\sl e.g.}, \cite{FR, Le1, Le2, Le3}). 

In this survey, we consider extension property which comes from ``visible geometry'', as introduced in \cite{BB, Ka, Z1, Z2, Z3, BZ, BM, BG, BGZ1, CL, BNT}. 

For the sake of simplicity, we restrict ourselves to biholomorphic maps between bounded domains in $\C^d$, $d\geq 1$.

One of the first results for bounded domains in $\C^d$ ($d \geq 2$), connecting extension of biholomorphic maps to the boundary behaviour of geodesics, is due to Lempert who provided in \cite{Le1} a complete description of complex geodesisc in strongly convex domains, studied their boundary extensions and re-proved as a consequence the Fefferman extension theorem. This extension result was pursued in \cite{Le4}, still using invariant metrics.

The use of invariant metrics is natural when dealing with the problem of biholomorphism extension. Indeed, by definition, biholomorphic maps are isometries for such metrics and the problem reduces to the extension of isometries, or more generally of 1-Lipschitz maps, in complete metric spaces. The crucial point here is to investigate relevant boundaries to which, almost by contruction, 1-Lipschitz maps extend. Such boundaries are abstract boundaries, in the sense that they are boundaries at infinity. From a Complex Analysis point of view, extension of biholomorphisms between bounded domains in $\C^d$ is generally understood as a homeomorphic extension up to the Euclidean boundaries. The use of abstract boundaries, providing nice extension results, is particularly efficient if one can prove that the considered abstract boundary is homeomorphic to the Euclidean boundary, for a given bounded domain in $\C^d$. We present here several abstract boundaries considered in different papers and, as an application, study the extension of biholomorphic maps.

We thank the referee for several useful remarks and comments which improved the original manuscript.

\section{Abstract boundaries and extension}

Let $D \subset\subset \C^d$ be a domain. An {\sl abstract boundary} $\partial_\ast D$ for $D$ is a topological space such that $\overline{D}^\ast:=D\cup \partial_\ast D$, the disjoint union of $D$ and $\partial_\ast D$, has a topology which, restricted to $D$, coincides with the Euclidean topology while, restricted to $\partial_\ast D$ coincides with the topology of $\partial_\ast D$. 

\begin{definition}
Let $\mathcal D$ be a collection of bounded domains in $\C^d$. An abstract boundary is {\sl suitable for $\mathcal D$} if, given any biholomorphism $F:D\to D'$, with $D, D'\in \mathcal D$ then there exists a homeomorphism $F^\ast: \overline{D}^\ast \to \overline{D'}^\ast$ such that $F^\ast|_D=F$. 
\end{definition}

\begin{example}
The Euclidean boundary of a bounded domain in $\C^d$ is an abstract boundary, which is however not suitable  for the collection of all bounded domains in $\C^d$.

The one-point compactification of a bounded domain in $\C^d$ defines an abstract boundary (formed by one point $\{\infty\}$) which is suitable for the collection of all bounded domains in $\C^d$.

\end{example}

Since we want to use abstract boundaries to study extension of biholomorphic maps, the previous example shows that suitable abstract boundaries are too general for this aim. Therefore, among suitable  abstract boundaries we select those that, for some model domains, are equivalent to the Euclidean closure of the domain. 

For a bounded domain $D \subset \C^d$, we denote by $\overline{D}$ the Euclidean closure of $D$. In particular,  $\overline{\B^d}$ denotes the Euclidean closure of the unit ball $\B^d:=\{z\in \C^d: \|z\|<1\}$ in $\C^d$.

\begin{definition}
Let $\mathcal D$ be a collection of bounded domains in $\C^d$ which contains $\B^d$. A {\sl faithful} abstract boundary is an abstract boundary suitable for $\mathcal D$ such that  the identity map ${\sf id}: \B^d \to \B^d$ extends as a homeomorphism ${\sf id}_{\B^d}^\ast: \overline{\B^d}^\ast \to \overline{\B^d}$.
\end{definition}

The choice of considering the ball as ``model domain'' is natural, but in general one has the following:

\begin{definition}
Let $\mathcal D$ be a collection of bounded domains in $\C^d$. A domain $D\in\mathcal D$ is a {\sl model domain} for an abstract boundary suitable for $\mathcal D$ if  the identity map ${\sf id}: D\to D$ extends as a homeomorphism ${\sf id}_{D}^\ast: \overline{D}^\ast \to \overline{D}$.

 A domain $D\in\mathcal D$ is a {\sl quasi-model domain} for an abstract boundary suitable for $\mathcal D$ if  the identity map ${\sf id}: D\to D$ extends as a sequentially continuous surjective map ${\sf id}_{D}^\ast: \overline{D}^\ast \to \overline{D}$.
\end{definition}

Trivially, we have the following extension result:

\begin{theorem}\label{Thm:abstract-ext}
If $D, D'\subset\subset \C^d$ are model domains for an abstract boundary suitable for a collection of domains containing $D$ and $D'$, then every biholomorphism $F:D\to D'$ extends as a homeomorphism from $\overline{D}$ to $\overline{D'}$. The extension of $F$ is given by ${\sf id}_{D'}^\ast\circ F^\ast \circ ({\sf id}_{D}^\ast)^{-1}$.
\end{theorem}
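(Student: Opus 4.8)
The statement is essentially a bookkeeping exercise in composing homeomorphisms, so the plan is to assemble the three homeomorphisms supplied by the hypotheses and then check that their composition restricts to $F$ on $D$. No new construction is required; everything follows by unwinding the definitions of \emph{suitable} and \emph{model domain}.

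First I would invoke suitability: since $D, D' \in \mathcal{D}$ and $F: D \to D'$ is a biholomorphism, the definition of a suitable abstract boundary yields a homeomorphism $F^\ast: \overline{D}^\ast \to \overline{D'}^\ast$ with $F^\ast|_D = F$. Next, since $D$ is a model domain, the extension ${\sf id}_D^\ast: \overline{D}^\ast \to \overline{D}$ is a homeomorphism, and hence so is its inverse $({\sf id}_D^\ast)^{-1}: \overline{D} \to \overline{D}^\ast$. Likewise, because $D'$ is a model domain, ${\sf id}_{D'}^\ast: \overline{D'}^\ast \to \overline{D'}$ is a homeomorphism. I would then set $\tilde F := {\sf id}_{D'}^\ast \circ F^\ast \circ ({\sf id}_D^\ast)^{-1}: \overline{D} \to \overline{D'}$, which as a composition of three homeomorphisms is automatically a homeomorphism.

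The only point genuinely requiring verification is that $\tilde F|_D = F$, and for this I would track each factor on the interior. By definition ${\sf id}_D^\ast|_D = {\sf id}_D$, so $({\sf id}_D^\ast)^{-1}$ maps $D \subset \overline{D}$ identically onto $D \subset \overline{D}^\ast$; then $F^\ast|_D = F$ carries $D$ onto $D'$; and finally ${\sf id}_{D'}^\ast|_{D'} = {\sf id}_{D'}$. Composing these restrictions gives $\tilde F|_D = {\sf id}_{D'} \circ F \circ {\sf id}_D = F$, as claimed. There is no real obstacle: the only subtlety worth noting is that $F^\ast$ must send $D$ to $D'$ (so that the three maps compose on the correct pieces), but this is immediate since $F^\ast$ extends the bijection $F: D \to D'$, forcing $F^\ast(D) = F(D) = D'$ and, by the homeomorphism property, $F^\ast(\partial_\ast D) = \partial_\ast D'$.
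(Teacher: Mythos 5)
Your argument is correct and is exactly the one the paper intends: the paper states this result as trivial, gives only the composition formula ${\sf id}_{D'}^\ast\circ F^\ast \circ ({\sf id}_{D}^\ast)^{-1}$, and leaves the verification (three homeomorphisms compose to a homeomorphism, and the restriction to $D$ is $F$) to the reader. Your write-up simply makes that omitted verification explicit, so there is nothing to add.
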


The previous theorem has a weaker version:

\begin{theorem}\label{Thm:abstract-ext2}
Let  $\mathcal D$ be a collection of bounded domains in $\C^d$ and let $D, D'\in \mathcal D$. If $D$ is a model domain and $D'$ is a quasi-model domain for an abstract boundary suitable for  $\mathcal D$,  then every biholomorphism $F:D\to D'$ extends continuously from $\overline{D}$ to $\overline{D'}$. The extension of $F$ is given by ${\sf id}_{D'}^\ast\circ F^\ast \circ ({\sf id}_{D}^\ast)^{-1}$.
\end{theorem}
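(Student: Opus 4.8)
The plan is to mimic the proof of Theorem \ref{Thm:abstract-ext}, adapting it to the weaker hypotheses. We have a biholomorphism $F:D\to D'$. Since the abstract boundary is suitable for $\mathcal D$ and $D,D'\in\mathcal D$, the definition of suitability directly gives a homeomorphism $F^\ast:\overline{D}^\ast\to\overline{D'}^\ast$ with $F^\ast|_D=F$. The candidate extension is $G:={\sf id}_{D'}^\ast\circ F^\ast\circ({\sf id}_D^\ast)^{-1}$, and I would first check that this is well defined: since $D$ is a \emph{model} domain, ${\sf id}_D^\ast:\overline{D}^\ast\to\overline{D}$ is a homeomorphism, so $({\sf id}_D^\ast)^{-1}$ exists and is continuous; $F^\ast$ is a homeomorphism; and ${\sf id}_{D'}^\ast:\overline{D'}^\ast\to\overline{D'}$ exists as a sequentially continuous surjection because $D'$ is a \emph{quasi-model} domain. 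Thus $G:\overline{D}\to\overline{D'}$ is at least defined as a map of sets, and is surjective because it is a composition of a homeomorphism, a homeomorphism, and a surjection.

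Next I would verify that $G$ restricts to $F$ on $D$. On $D$, the map ${\sf id}_D^\ast$ and ${\sf id}_{D'}^\ast$ both restrict to the Euclidean identity (by the defining property of the extended identities and the compatibility of topologies on $\overline{D}^\ast$ with the Euclidean topology on $D$), and $F^\ast|_D=F$ maps $D$ into $D'$; so on $D$ the composition collapses to $F$ itself. Hence $G|_D=F$, which is what ``extends $F$'' means.

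The remaining and genuinely new point, compared to Theorem \ref{Thm:abstract-ext}, is continuity of $G$ on all of $\overline{D}$ rather than homeomorphy. Here the obstacle is that ${\sf id}_{D'}^\ast$ is only \emph{sequentially} continuous and surjective, not a homeomorphism, so I cannot simply compose homeomorphisms. Instead I would argue sequentially: since $\overline{D}$ is a metrizable (indeed Euclidean, compact) space, continuity of $G$ is equivalent to sequential continuity. Take a sequence $x_n\to x$ in $\overline{D}$. Because $({\sf id}_D^\ast)^{-1}$ is continuous (a homeomorphism), $({\sf id}_D^\ast)^{-1}(x_n)\to({\sf id}_D^\ast)^{-1}(x)$ in $\overline{D}^\ast$; because $F^\ast$ is a homeomorphism, $F^\ast(({\sf id}_D^\ast)^{-1}(x_n))\to F^\ast(({\sf id}_D^\ast)^{-1}(x))$ in $\overline{D'}^\ast$; and finally, by sequential continuity of ${\sf id}_{D'}^\ast$, applying it preserves convergence, giving $G(x_n)\to G(x)$ in $\overline{D'}$. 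This chain shows $G$ is sequentially continuous, hence continuous on the metrizable space $\overline{D}$.

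The main delicate step is thus the last one: one must be careful that sequential continuity composes correctly through the chain. The composition of a sequentially continuous map with continuous (homeomorphic) maps on either side is again sequentially continuous, which is exactly the situation here, so the argument goes through; the use of sequences (rather than nets) is justified precisely because the relevant spaces, being Euclidean closures of bounded domains in $\C^d$, are metrizable. No homeomorphy of $G$ is claimed, consistent with the fact that ${\sf id}_{D'}^\ast$ may fail to be injective, which is exactly why only continuous (and not homeomorphic) extension is obtained in this weaker version.
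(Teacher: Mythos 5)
Your proposal is correct and follows essentially the same route as the paper's proof: compose $({\sf id}_D^\ast)^{-1}$ and $F^\ast$ (both homeomorphisms) with the sequentially continuous surjection ${\sf id}_{D'}^\ast$, observe that the composition is sequentially continuous, and conclude continuity from the fact that $\overline{D}$ is first countable (you invoke metrizability, which is the same point). The additional checks of well-definedness and of $G|_D=F$ are fine but routine.
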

\begin{proof}
The only subtle point here is that ${\sf id}_{D'}^\ast$ is in principle only sequentially continuous. However, this implies that ${\sf id}_{D'}^\ast\circ F^\ast \circ ({\sf id}_{D}^\ast)^{-1}:{\overline{D} \to \overline{D'}}$ is sequentially continuous. SInce $\overline{D}$ is first countable, then ${\sf id}_{D'}^\ast\circ F^\ast \circ ({\sf id}_{D}^\ast)^{-1}:{\overline{D}\to \overline{D'}}$ is  continuous.
\end{proof}

\begin{remark}
The previous theorems have a converse implication. Let  $\mathcal D$ be a collection of bounded domains in $\C^d$ and let $D, D'\in \mathcal D$. If $D$ is a model domain for some  abstract boundary suitable for $\mathcal D$ and $F:D \to D'$ extends as a homeomorphism ({\sl respectively}, as a continuous surjective map) from $\overline{D}$ to $\overline{D'}$, then $D'$ is a model ({\sl resp.} quasi-model) domain for the same abstract boundary. The extension of ${\sf id}_{D'}$ is given by $ F^\ast\circ  {\sf id}_D^\ast\circ {F^\ast}^{-1}$.
\end{remark}

From the previous considerations is then clear that extension of biholomorphisms is related to the existence of suitable abstract boundaries and model/quasi-model domains. 

In the next section we are going to recall some constructions of abstract boundaries.

\section{The Carath\'eodory boundary} Here we consider $\mathcal D$ to be the collection of all bounded\footnote{actually, one can remove the hypothesis of boundedness by considering hyperbolic simply connected domains in the Riemann sphere, replacing the Euclidean distance with the spherical distance, see, {\sl e.g.}, \cite[Chapter 4]{BCD} for details.} 
 simply connected domain in $\C$. Let $D\in \mathcal D$. A {\sl cross-cut} for $D$ is a continuous injective curve $\gamma:[0,1]\to D$ such that $\gamma((0,1))\subset D$ and $\gamma(0), \gamma(1)\in \partial D$. A cross-cut divides $D$ into two simply connected components. A {\sl null chain} $(C_n)_{n\geq 0}$ is a sequence of cross-cuts such that $\hbox{diam}(C_n)\to 0$ as $n\to \infty$ and $C_{n+1}$ is contained in the connected component of $D\setminus C_n$ which does not contain $C_0\cap D$, for every $n\geq 1$. 
 
 The connected component of $D\setminus C_n$, $n\geq 1$, which does not contain $C_0$ is called the {\sl interior part} of $C_n$. 
 
Let $(C_n)$ and $(G_n)$  be two null chains of $D$ and let $\{V_n\}$ and $\{W_n\}$ denote their interior parts. We say that $(C_n)$ and $(G_n)$ are {\sl equivalent} if for every $n\geq 1$ there exists $m_n$ such that $V_j\subset W_n$ and $W_j\subset V_n$ for all $j\geq m_n$. 

\begin{definition}
The equivalence class $[(C_n)]$ of a null-chain $(C_n)$ in $D$ is called a {\sl prime end}. The {\sl Carath\'eodory boundary} $\partial_C D$ of $D$ is the set of all prime ends of $D$.
\end{definition}

Let $U$ be an open set in $D$.  We define $U^\ast$ to be the union of $U$ and of all the prime ends $[(C_n)]$ such that $\{C_n\cap D\}$ is eventually contained in $U$. We give a topology on $\overline{D}^C:=D\cup \partial_C D$ by considering the topology generated by all open sets $U$ of $D$ and $U^\ast$. 

Note that, if $[(C_n)]$ is a prime end, then $\{V_n\}$ is a countable fundamental system of open neighborhoods of $[(C_n)]$. Carath\'eodory's famous extensions theorems can be translated as follows (see, {\sl e.g.} \cite[Chapter 4]{BCD}):

\begin{theorem}
Let $\mathcal D$  be the collection of all bounded simply connected domains in $\C$. Then the Carath\'eodory boundary is a faithful abstract boundary suitable for $\mathcal D$. Moreover, $D\in \mathcal D$ is a semi-model domain for the Carath\'eodory boundary if and only if $\partial D$ is locally connected, while, $D\in \mathcal D$ is a model domain for the Carath\'eodory boundary if and only if $\partial D$ is a Jordan curve.
\end{theorem}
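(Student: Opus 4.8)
The plan is to use the Riemann mapping theorem to realize every $D\in\mathcal D$ as a conformal image of the unit disc $\D=\B^1$, to transport the prime-end compactification to the closed disc once and for all, and then to read off the three assertions from the classical extension theorems of Carath\'eodory. First I would fix $D\in\mathcal D$ and a Riemann map $\varphi\colon \D\to D$, and invoke the prime end theorem: $\varphi$ extends to a homeomorphism $\widehat\varphi\colon \overline{\D}\to \overline{D}^C$ which carries $\partial\D$ bijectively onto $\partial_C D$. Concretely, this means that the Euclidean neighbourhood basis of a point $\zeta\in\partial\D$ is sent by $\varphi$ to a fundamental system $\{V_n\}$ of interior parts of a null-chain, so that $\widehat\varphi$ is a homeomorphism for the topology on $\overline{D}^C$ generated by the sets $U^\ast$. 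Establishing this correspondence between boundary arcs of $\D$ and null-chains of $D$ is the substantive classical input, and I would quote it from \cite[Chapter 4]{BCD}. A first consequence, recorded for later use, is that $\overline{D}^C$ is always homeomorphic to $\overline{\D}$.

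For suitability, given a biholomorphism $F\colon D\to D'$ with $D,D'\in\mathcal D$, I would choose Riemann maps $\varphi\colon\D\to D$ and $\psi\colon\D\to D'$ and form $\tau:=\psi^{-1}\circ F\circ\varphi$, an automorphism of $\D$. Since automorphisms of $\D$ are M\"obius transformations, $\tau$ extends to a homeomorphism $\overline{\tau}$ of $\overline{\D}$, and then $F^\ast:=\widehat\psi\circ\overline{\tau}\circ\widehat\varphi^{-1}\colon \overline{D}^C\to\overline{D'}^C$ is a homeomorphism with $F^\ast|_D=F$. For faithfulness I would simply note that $\partial\D$ is a Jordan curve, so that the null-chains of $\D$ are the usual shrinking crosscuts at a boundary point and the prime ends of $\D$ are precisely its Euclidean boundary points; hence ${\sf id}_{\D}^\ast\colon \overline{\D}^C\to\overline{\D}$ is a homeomorphism and the Carath\'eodory boundary is faithful.

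It remains to characterize model and quasi-model domains, and here I would push the extension problem for ${\sf id}_D$ through the homeomorphism $\widehat\varphi$ from the first step. Because $\widehat\varphi$ is a homeomorphism, the map ${\sf id}_D^\ast\colon \overline{D}^C\to\overline{D}$ exists as a homeomorphism (respectively, as a sequentially continuous surjection) if and only if $g:={\sf id}_D^\ast\circ\widehat\varphi\colon\overline{\D}\to\overline{D}$ does; and since $g|_\D=\varphi$, the map $g$ is nothing but a boundary extension of the Riemann map $\varphi$. Carath\'eodory's homeomorphism theorem then gives that $\varphi$ extends to a homeomorphism $\overline{\D}\to\overline{D}$ exactly when $\partial D$ is a Jordan curve, yielding the model-domain characterization, while his continuity theorem gives that $\varphi$ extends continuously exactly when $\partial D$ is locally connected. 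For the quasi-model case I would add two elementary remarks: a continuous extension of $\varphi$ to the compact set $\overline{\D}$ has compact image containing $D$ and contained in $\overline{D}$, hence equal to $\overline{D}$, so it is automatically surjective; and conversely, since $\overline{\D}$ is first countable, sequential continuity of $g$ already forces continuity. The main obstacle is none of these reductions but the prime end theorem of the first step, whose proof --- the matching of null-chains with boundary arcs --- is exactly the nontrivial content of Carath\'eodory's theory that is being repackaged in the language of abstract boundaries.
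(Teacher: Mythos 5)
Your proposal is correct and follows essentially the same route as the paper, which offers no argument of its own beyond the remark that ``Carath\'eodory's famous extension theorems can be translated as follows'' with a citation to \cite[Chapter 4]{BCD}: your write-up is precisely the expected unpacking of those theorems (the prime end theorem giving $\overline{\D}\cong\overline{D}^C$ for suitability and faithfulness, the continuity theorem for the locally connected case, and the Jordan curve version for the model case). The only point worth flagging is that the statement's term ``semi-model domain'' is never defined in the paper and must be read as ``quasi-model domain'', which is how you implicitly treat it.
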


\section{The Gromov boundary} Let $\mathcal D$  be the collection of all bounded complete (Kobayashi) hyperbolic domains in $\C^d$. Let $D\in \mathcal D$. The completeness of the metric guarantees, via the Hopf-Rinow theorem, that every two points in $D$ can be joined by a geodesic {segment} (namely, a hyperbolic-length minimizing curve).  Let $K_D$ denote the Kobayashi distance of $D$. A {\sl geodesic ray} $\gamma:[0,+\infty)$ with base point $z_0\in D$ is a continuous curve such that for every $t, s\geq 0$,
\[
|t-s|=K_D(\gamma(s), \gamma(t)).
\]
Two geodesic rays $\gamma, \eta$ in $D$ with base point $z_0$ are {\sl asymptotic} if there exists $C>0$ such that $K_D(\gamma(s), \eta(s))\leq C$ for all $s\geq 0$. Being asymptotic is an equivalence relation. Therefore, the following definition makes sense:
\begin{definition}
The {\sl Gromov boundary} $\partial_G D$ of $D$ is the set of all equivalence classes of asymptotic geodesic rays with base point $z_0\in D$.
\end{definition}

We say that a sequence $\{\sigma_n\}\subset \partial_G D$ converges to $\sigma\in \partial_G D$ if, for every $n\in \N$ there exists  a geodesic ray $\gamma_n$  representing $\sigma_n$ such that every subsequence of $\{\gamma_n\}$ contains a subsequence that converges uniformly on compacta of $[0,+\infty)$ to a geodesic ray $\gamma$ such that $\sigma=[\gamma]$. This definition allows to define a topology on $\partial_G D$, which we call the {\sl Gromov topology} (see, {\sl e.g.} \cite[Chapter III.H.3]{BH}). 

The choice of the base point $z_0$ is irrelevant, for, changing the base point naturally defines a homeomorphism on the corresponding Gromov boundaries. 

We define the Gromov closure $\overline{D}^G$ of $D$ by
\[
\overline{D}^G:=D\cup \partial_G D.
\]
We define a topology on $\overline{D}^G$, called the {\sl Gromov topology}, which makes $\partial_G D$ an abstract boundary. To this aim, fix a base point $z_0\in D$. For $z\in D$, let $c(z)$ be a geodesic {segment} joining $z_0$ and $z$---there exists at least one, but it might not be unique. 

Note that, if $\{z_n\}\subset D$ is a sequence, since $(D, K_D)$ is complete, $K_D(z_0,\cdot)$ is proper and, as $c(z_n)$ is an isometry for every $n$, then $\{c(z_n)\}$  is equibounded and equicontinuous on compacta of $[0,+\infty)$, thus, by the Arzel\'a-Ascoli theorem, one can extract a subsequence of $\{c(z_n)\}$ converging uniformly on compacta to a geodesic {segment} or to a geodesic ray (the second possibility occurs if and only if $\{z_n\}$ is compactly divergent in $D$). Moreover, $\{z_n\}\subset D$ converges to $z\in D$ if and only if every converging subsequence of $\{c(z_n)\}$ converges uniformly  to a geodesic {segment} of the type $c(z)$. 

In this way, we can define a topology on $\overline{D}^G$ which coincides with the Euclidean topology on $D$ and with the Gromov topology previously defined on $\partial_G D$. Note that a sequence $\{z_n\}\subset D$ converges in this topology to $\sigma\in \partial_G D$ if for every $n\in \N$ there exists a geodesic $c(z_n)$ such that $\{c(z_n)\}$ converges uniformly on compacta to a geodesic ray $\gamma\in \sigma$. 

By construction, the Gromov boundary is suitable for $\mathcal D$ since biholomorphisms are isometries for the Kobayashi distance. We have

\begin{theorem}
Let $\mathcal D$  be the collection of all bounded complete (Kobayashi) hyperbolic domains in $\C^d$. Then the Gromov boundary is a faithful abstract boundary suitable for $\mathcal D$.
\end{theorem}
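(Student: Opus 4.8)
The plan is to split the statement into its two assertions: \emph{suitability} for $\mathcal D$, and \emph{faithfulness}, i.e. that $\B^d$ is a model domain.

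For suitability, let $F\colon D\to D'$ be a biholomorphism with $D,D'\in\mathcal D$. Since $F$ is an isometry for the Kobayashi distances, it maps geodesic segments to geodesic segments and geodesic rays to geodesic rays, and it preserves the asymptotic relation, because $K_{D'}(F(\gamma(s)),F(\eta(s)))=K_D(\gamma(s),\eta(s))$ for all $s$. Hence $[\gamma]\mapsto[F\circ\gamma]$ is a well-defined bijection $\partial_G D\to\partial_G D'$. As $F$ is a homeomorphism it preserves uniform convergence on compacta and carries compactly divergent sequences to compactly divergent sequences; therefore, gluing it with the induced boundary map, one obtains a bijection $F^\ast\colon\overline D^G\to\overline{D'}^G$ which is continuous with continuous inverse for the Gromov topologies and satisfies $F^\ast|_D=F$. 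This is exactly suitability.

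The substance of the theorem is faithfulness. Here the crucial input is that on $\B^d$ the Kobayashi distance coincides with a constant multiple of the Bergman distance and is the distance of complex hyperbolic space; in particular $(\B^d,K_{\B^d})$ is a Hadamard manifold of pinched negative curvature, so it is proper, uniquely geodesic and CAT$(0)$, and the Kobayashi geodesics are the Riemannian ones. I would fix the base point $z_0=0$. Restricting the metric to the complex line $\C v$ gives an isometric copy of the Poincar\'e disk, whose real diameter is a geodesic; since real unit tangent vectors at $0$ are parametrized by $S^{2d-1}=\partial\B^d$, \emph{every} geodesic ray issued from $0$ is a diameter $\gamma_v(t)=(\tanh t)\,v$, $v\in S^{2d-1}$, and $\gamma_v(t)\to v$ in the Euclidean topology. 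Because in a CAT$(0)$ space two asymptotic rays with the same base point coincide, the equivalence classes of rays from $0$ are singletons, so $\Phi\colon\partial_G\B^d\to\partial\B^d$, $[\gamma_v]\mapsto v$, is a bijection. The same conclusion follows concretely from the explicit formula for $K_{\B^d}$: for $v\neq w$ one has $1-\langle v,w\rangle\neq0$, whence $K_{\B^d}(\gamma_v(s),\gamma_w(s))\to+\infty$, so distinct diameters are non-asymptotic.

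It remains to check that $\Phi$ is a homeomorphism and that the glued map ${\sf id}_{\B^d}^\ast$ is continuous across the boundary. Since $\gamma_v(t)=(\tanh t)\,v$ depends continuously on $v$, the correspondence $v\mapsto\gamma_v$ is a homeomorphism from $S^{2d-1}$ onto the rays from $0$ equipped with uniform convergence on compacta, which is precisely how the Gromov topology is read off; hence $\Phi$ and $\Phi^{-1}$ are continuous. Finally, from the construction of the Gromov closure, $\{z_n\}\subset\B^d$ converges to $\sigma\in\partial_G\B^d$ iff the segments $c(z_n)$ from $0$ to $z_n$ converge uniformly on compacta to a ray in $\sigma$; in the ball these segments lie on the diameters through $z_n/\|z_n\|$, so this occurs iff $\|z_n\|\to1$ and $z_n/\|z_n\|\to v=\Phi(\sigma)$, that is, iff $z_n\to\Phi(\sigma)$ in the Euclidean sense. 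Thus ${\sf id}_{\B^d}^\ast$, equal to the identity on $\B^d$ and to $\Phi$ on $\partial_G\B^d$, is a homeomorphism $\overline{\B^d}^G\to\overline{\B^d}$, proving that $\B^d$ is a model domain and hence that the boundary is faithful. The main obstacle is the concrete description of the geodesics of $(\B^d,K_{\B^d})$ together with the translation of the abstract Gromov topology into Euclidean convergence of endpoints; everything else is then formal, the negatively curved geometry reducing the problem, via homogeneity, to rays based at the origin.
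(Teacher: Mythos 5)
Your argument is correct. Note, though, that the paper itself offers essentially no proof of this theorem: suitability is dispatched in the single sentence preceding the statement (biholomorphisms are Kobayashi isometries, hence preserve geodesics, the asymptotic relation, and uniform convergence on compacta), which is exactly your first paragraph, while faithfulness is left implicit, being a special case of the cited result of Balogh--Bonk \cite{BB} that $C^2$-smooth bounded strongly pseudoconvex domains are model domains for the Gromov boundary. What you do differently is to prove faithfulness for $\B^d$ directly and elementarily: since $K_{\B^d}$ is (up to a constant) the complex hyperbolic metric, the rays from $0$ are exactly the diameters $t\mapsto(\tanh t)v$, distinct diameters are non-asymptotic (either by the CAT$(0)$ uniqueness of rays from a point in a given class, or by the explicit distance formula), and convergence of geodesic segments uniformly on compacta translates into Euclidean convergence of endpoints. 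This buys a self-contained verification that avoids the machinery of \cite{BB} (Gromov hyperbolicity of strongly pseudoconvex domains and the identification of their Gromov boundary with the Euclidean boundary), at the cost of working only for the ball --- which is all that faithfulness requires. The one point worth stating explicitly is that the Gromov boundary, as defined here, consists of classes of rays \emph{based at the fixed point} $z_0=0$, so that uniqueness of the representative diameter in each class is exactly what makes your map $\Phi$ well defined and injective; you use this implicitly and it is correct.
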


There is no general characterization of model and semi-model domains for the Gromov boundary, but they are known in some special cases. 

Let $D$ be a bounded complete hyperbolic domain in $\C^d$. A {\sl geodesic triangle} is the union of three geodesics $\gamma_1, \gamma_2, \gamma_3$ such that the initial point of $\gamma_j$ coincides with the final point of $\gamma_{j+1}$, $j=1,2,3 \mod 3$. The geodesics $\gamma_j$ are called {\sl edges} of the geodesic triangle.

The domain $D$  is called {\sl Gromov hyperbolic} if  there exists $M>0$ such that every edge of every geodesic triangle is contained in the $M$-hyperbolic neighborhood of the union of the other two edges. Note that Gromov hyperbolicity is invariant under biholomorphisms.

The following domains are model domains for the Gromov boundary:

\begin{enumerate}
\item $C^2$-smooth bounded strongly pseudoconvex domains \cite{BB}.
\item Gromov hyperbolic bounded convex domains \cite{BGZ1} (in particular, by \cite{Z1}, smooth bounded convex domains of finite type).
\item Smooth bounded pseudoconvex domains of D'Angelo finite type in $\C^2$ \cite{Fia}.
\end{enumerate}

In particular, since by \cite{BB} $C^2$-smooth bounded strongly pseudoconvex domains  are Gromov hyperbolic ({\sl respectively} by \cite{Fia} smooth bounded pseudoconvex domains of D'Angelo finite type in $\C^2$), (2) implies that every bounded convex domain biholomorphic to a smooth bounded strongly pseudoconvex domain or to a smooth bounded pseudoconvex domain of D'Angelo finite type in $\C^2$ is a model domain for the Gromov boundary. Also, by \cite[Corollary 1.7]{BGNT}, every bounded domain $D\subset \C^d$ such that for every $p\in \partial D$ there exists an open neighborhood $U_p$ of $p$ such that $U_p\cap D$ is a model domain for the Gromov boundary, is a model domain for the Gromov boundary.

To understand semi-model domains for the Gromov boundary, one needs to introduce the notion of {\sl visibility} \cite{BZ, BM, BNT}:

\begin{definition}
A bounded complete hyperbolic domain $D\subset \C^d$ is {\sl visible} if, for every two sequences $\{z_n\}, \{w_n\}\subset D$ such that $\{z_n\}$ converges to a point $p\in \partial D$ and $\{w_n\}$ converges to a point $q\in \partial D$, with $p\neq q$, there exists a compact set $K\subset\subset D$ such that every geodesic in $D$ joining $z_n$ and $w_n$ intersects $K$.
\end{definition}

We have the following result (cf. \cite[Thm. 3.3]{BNT}):

\begin{theorem}
Let $D\subset \C^d$ be a bounded complete hyperbolic domain with no non-trivial analytic discs on $\partial D$. If $D$ is visible then it is a semi-model domain for the Gromov boundary.
\end{theorem}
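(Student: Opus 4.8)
The plan is to construct the required extension ${\sf id}_D^\ast \colon \overline{D}^G \to \overline{D}$ explicitly and to verify that it is a sequentially continuous surjection, which is exactly what is needed for $D$ to be a semi-model domain. On $D$ the map is the identity; on $\partial_G D$ I would send a class $\sigma=[\gamma]$ to the Euclidean limit of $\gamma(t)$ as $t\to+\infty$. The first point to establish is that each geodesic ray $\gamma$ based at $z_0$ lands at a single point of $\partial D$. Since $K_D(z_0,\gamma(t))=t\to+\infty$ and $K_D(z_0,\cdot)$ is proper, $\gamma(t)$ leaves every compact subset of $D$, so all its subsequential limits in the compact set $\overline D$ lie on $\partial D$. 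If there were two distinct limits $p\neq q$, say $\gamma(t_n)\to p$ and $\gamma(s_n)\to q$ with $t_n<s_n\to+\infty$, then visibility applied to $\{\gamma(t_n)\}$ and $\{\gamma(s_n)\}$ yields a compact $K\subset\subset D$ met by every joining geodesic, in particular by $\gamma|_{[t_n,s_n]}$; thus $\gamma(u_n)\in K$ for some $u_n\geq t_n\to+\infty$, contradicting $K_D(z_0,\gamma(u_n))=u_n\to+\infty$. Hence $p(\gamma)\in\partial D$ exists.

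I would then check that $p(\gamma)$ depends only on $\sigma=[\gamma]$. If $\gamma,\eta$ are asymptotic, $K_D(\gamma(s),\eta(s))\leq C$, and if $p(\gamma)=p\neq q=p(\eta)$, pick $s_n\to+\infty$ so that $\gamma(s_n)\to p$ and $\eta(s_n)\to q$; by visibility a joining geodesic meets a fixed compact $K$ at some $\zeta_n$, and since that geodesic has length $K_D(\gamma(s_n),\eta(s_n))\leq C$ we get $K_D(z_0,\gamma(s_n))\leq \sup_{K}K_D(z_0,\cdot)+C<+\infty$, contradicting $K_D(z_0,\gamma(s_n))=s_n\to+\infty$. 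So ${\sf id}_D^\ast(\sigma):=p(\gamma)$ is well defined. Throughout, visibility together with the properness of $K_D(z_0,\cdot)$ is the real engine; the hypothesis that $\partial D$ carries no nontrivial analytic disc is the companion geometric condition ruling out degenerate boundary behaviour (indeed visibility already forces any two sequences tending to distinct boundary points to be Kobayashi-unbounded, which is what each contradiction exploits).

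The core of the proof is sequential continuity, and I expect the bookkeeping here to be the main obstacle. The only nontrivial situation is a sequence converging to a boundary value. If $\{z_n\}\subset D$ converges to $\sigma\in\partial_G D$ in the Gromov topology, choose geodesic segments $c(z_n)$ from $z_0$ to $z_n$ with $c(z_n)\to\gamma\in\sigma$ uniformly on compacta; then $L_n:=K_D(z_0,z_n)\to+\infty$. If some subsequence $z_{n_k}\to q$ with $q\neq p:=p(\gamma)$, a diagonal argument produces times $T_k\to+\infty$, $T_k\leq L_k$, with $c(z_{n_k})(T_k)\to p$ (using $c(z_{n_k})\to\gamma$ and $\gamma(t)\to p$), while $c(z_{n_k})(L_k)=z_{n_k}\to q$. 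Visibility then forces $c(z_{n_k})|_{[T_k,L_k]}$ to meet a fixed compact $K$ at a time $u_k\geq T_k\to+\infty$, contradicting $K_D(z_0,c(z_{n_k})(u_k))=u_k\to+\infty$; hence every subsequential limit of $\{z_n\}$ equals $p$ and $z_n\to p={\sf id}_D^\ast(\sigma)$. The case $\{\sigma_n\}\subset\partial_G D$ with $\sigma_n\to\sigma$ runs on the same mechanism: passing to a sub-subsequence of representing rays $\gamma_m\to\gamma'\in\sigma$ uniformly on compacta and selecting, by a double diagonal, times $S_m<t_m$, both tending to $+\infty$, with $\gamma_m(S_m)\to p(\gamma')$ and $\gamma_m(t_m)\to\lim p(\sigma_m)$, one reaches the same contradiction unless $\lim p(\sigma_m)=p(\gamma')$.

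Finally, surjectivity onto $\overline D$ is clear for interior points, and for $p\in\partial D$ it follows by taking $z_n\to p$, extracting through the Arzel\`a--Ascoli theorem a subsequential limit ray $\gamma$ of the connecting geodesics $c(z_n)$ (it is a ray since $K_D(z_0,z_n)\to+\infty$), and applying the continuity computation to conclude $p(\gamma)=\lim z_n=p$; thus $\sigma=[\gamma]$ satisfies ${\sf id}_D^\ast(\sigma)=p$. Assembling the steps, ${\sf id}_D^\ast$ is a sequentially continuous surjection extending ${\sf id}_D$, which is precisely the statement that $D$ is a semi-model domain for the Gromov boundary. The delicate part is entirely in the diagonal extractions of the continuity step, where one must keep the chosen times tending to infinity while forcing the two families of endpoints to converge to the two prescribed, distinct boundary points.
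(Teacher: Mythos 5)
Your proof is correct, and its overall architecture is the same as the paper's: define the extension on $\partial_G D$ by sending a class to the common landing point of its representatives, then check well-definedness, surjectivity and sequential continuity. The difference is in what drives each step. The paper delegates landing, surjectivity and sequential continuity to \cite[Lemma 3.1]{BNT} and, crucially, obtains well-definedness (asymptotic rays land at the same point) from D'Addezio's Lemma~\ref{Prop:Dad}, which is where the hypothesis that $\partial D$ contains no non-trivial analytic discs enters. You instead run every step directly off visibility plus completeness: each contradiction comes from producing points of a joining geodesic that lie in a fixed compact set while their Kobayashi distance to the base point tends to infinity. In particular, your argument for well-definedness (a joining geodesic of length at most $C$ meeting a compact $K$ forces $K_D(z_0,\gamma(s_n))\leq \sup_K K_D(z_0,\cdot)+C$) shows that visibility together with completeness already yields the conclusion of D'Addezio's Lemma, so the no-analytic-discs hypothesis is never actually used in your proof. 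What this buys is a self-contained argument that is formally a bit stronger than the stated theorem; what the paper's route buys is brevity and a clean separation of the two hypotheses (the disc condition handles asymptotic rays, visibility handles the rest). The diagonal extractions you flag as delicate are indeed the only bookkeeping-heavy part, and they are carried out correctly (in each case the chosen times are bounded above by the parameter lengths $L_{n_k}$, which tend to infinity by completeness, so the restricted geodesic segments exist and visibility applies).
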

\begin{proof}
Let $D\subset \C^d$ be a bounded complete hyperbolic domain  with no non-trivial analytic discs on $\partial D$. Assume $D$ is visible.

By \cite[Lemma 3.1]{BNT}, every geodesic ray {\sl lands} ({\sl i.e.}, if $\gamma:[0,+\infty)\to D$ is a geodesic ray, then there exists a point $p\in \partial D$ such that $\lim_{t\to+\infty}\gamma(t)=p$) -- this follows also from D'Addezio's Lemma~\ref{Prop:Dad} taking into account that by hypothesis $D$ does not have analytic discs on the boundary.

Since $\partial D$ does not contain analytic discs, it follows from D'Addezio's Lemma~\ref{Prop:Dad} that if $\gamma$ and $\eta$ are two geodesic rays which are asymptotic, then they land at the same boundary point. Therefore, the map
\[
\Phi_D: \partial_G D \to \partial D
\]
that associates, at every $\sigma\in \partial_G D$, the landing point of any of its representative, is well defined. 

Extend the map $\Phi_D$ in $D$ by declaring $\Phi_D(z)=z$ for all $z\in D$. By \cite[Lemma 3.1]{BNT}, if $\{z_k\}\subset D$ is a sequence converging to a point $p\in \partial D$, and $z_0\in D$, then from every sequence  $\{\gamma_n\}$ of geodesics such that $\gamma_n$ joins $z_0$ to $z_n$, one can extract a subsequence with converges uniformly on compacta to a geodesic ray $\gamma$ starting from $z_0$ and landing at $p$. Therefore, $\Phi_D:\overline{D}^G \to \overline{D}$ is a surjective extension of ${\sf id}_D:D \to D$. It follows again by \cite[Lemma 3.1]{BNT} that $\Phi_D$ is sequentially continuous.
\end{proof}

\begin{remark}
It is an open question if in general the existence of a non-trivial analytic disc on the boundary of a bounded domain prevents visibility. 
\end{remark}

{
We saw in this Section the importance of Gromov hyperbolicity, that can be seen as a condition of negative curvature. There are few characterizations, or even examples, of Gromov hyperbolic domains in $\mathbb C^d$ (see some examples here above) and there are interesting open questions relating different possible notions of negative curvature.

\vspace{1mm}
\noindent{\bf Domains satisfying a uniform squeezing property.}

\vspace{1mm}
We recall that, following \cite{Ye}, a bounded domain $D \subset \C^d$ satisfies a uniform squeezing property if there exists $0 < r < 1$ and, for every $z \in D$, there exists a holomorphic embedding $\varphi_z : D \rightarrow \B^d$ such that $\varphi_z(z) = 0$ and $B(0,r):=\{w \in \C^d /\ \|z\| < r\} \subset \varphi_z(D)$. We define the squeezing function
$$
s_D(z):=\sup \{0 < r < 1/\ \exists \ \varphi_z : D \stackrel{\rm hol. \ emb.}{\longrightarrow} \B^d,\ \varphi_z(z) = 0,\ B(0,r) \subset \varphi_z(D)\}.
$$

Bounded convex domains (without boundary regularity assumption) are examples of domains satisfying a uniform squeezing property. This is the case of the polydisc, which is not Gromov hyperbolic. Strongly pseudoconvex domains, which are Gromov hyperbolic, are examples of domains with squeezing function tending to one at the boundary.

Good estimates of invariant metrics rely only on compactness arguments: the Bergman metric, the K\"ahler-Einstein metric, the Carath\'eodory metric and the Kobayashi metric are consequently all bi-Lipschitz in domains satisfying a uniform squeezing property (see \cite{Ye}). Riemannian manifolds with negative curvature possess very nice metric properties. In the context of K\"ahler manifolds, all the information should be carried over by holomorphic (bi)sectional curvature. To obtain precise estimates, for instance on holomorphic (bi)sectional curvatures of a complete K\"ahler metric on a given domain, one generally needs to osculate the boundary of the domain locally by special domains, obtained essentially under a scaling process, and for which such curvature estimates are known. In the context of squeezing property, this is given by the condition that the squeezing function tends to one at the boundary, such domains being heuristically exhausted by balls; they consequently share many properties of the ball. For instance, they admit a complete K\"ahler-Einstein metric whose holomorphic (bi)sectional curvature converges to the one of the unit ball. The same properties are also satisfied for the Bergman metric, the proof relying for both metrics on a scaling process. In connection with the Gromov hyperbolicity, the following question makes sense, in view of the non Gromov hyperbolocity of the polydisc:

\vspace{1mm}
{\sl Let $D$ be a bounded domain in $\C^d$ for which $s_D(z) \rightarrow 1$ when $z \rightarrow \partial D$. Is $D$ Gromov hyperbolic?}

\vspace{1mm}
Note that we defined Gromov hyperbolicity for a domain in $\C^d$ endowed with its Kobayashi distance. We could have considered any other invariant metric, in the context of complex manifolds. It would be interesting to figure out under which geometric conditions on a given domain, we would obtain the same result by considering any of the classical invariant metrics. For domains satisfying a uniform squeezing property, choosing any of the Bergman, the K\"ahler-Einstein, the Carath\'eodory or the Kobayashi metrics will give the same conclusion, the metrics being all bi-Lipschitz.  

Quite surprisingly the above question is still open. With the previous discussion, we would expect $D$ to be Gromov hyperbolic, since the unit ball is. A strategy to prove it might be as follows. Since the Bergman metric has negatively pinched holomorphic bisectional curvature near the boundary of $D$, there should exist a neighborhood $U$ of $\partial D$ such that for all points $x,y,z \in D \cap U$, sufficiently close to each other, geodesic triangles connecting $x$, $y$ and $z$, for the Bergman metric, should be $\delta$-thin, where $\delta > 0$ depends only on $D$ and on the relative distances between the three points. This should come from the theory of negatively curved Riemann manifolds, applied with the Bergman distance. It follows that $D$ should satisfy a ``local Gromov hyperbolic condition'':

\vspace{2mm}
{\sl There exists $M > 0$ and, for every point $p \in \overline{D}$, there exist two open sets $U$ and $V$ such that $p \in V \subset \subset U$ and for every $x, y, z \in D \cap V$, every geodesic triangle (for the Kobayashi metric) connecting $x$, $y$ and $z$ is contained in $U$ and is $M$-thin.}

\vspace{2mm}
Note that this local property concerns in fact only points $p \in \partial D$. Indeed, since $D$ is uniformly squeezing, $D$ is Kobayashi complete hyperbolic. Fix $p \in D$ and let $V:=\{w \in D/\ K_D(p,w) < 1\}$ and $U:=\{w \in D/\ K_D(z,w) < 2\}$. Then $V \subset \subset U \subset \subset D$ and for all points $w, w' \in V$, every geodesic segment joining $w$ to $w'$ is contained in $U$. Finally, for every triple $(x,y,z)$ of points in $V$, every geodesic triangle connecting these points is $4$-thin.

The matter is finally to go from small triangles to large triangles, meaning from a local property to a global one and to understand the asymptotic behaviour of geodesic segments joining points $x$ and $y$ that are sufficiently close to $\partial D$ but that are far from each other. This is the key argument in \cite{BB} to treat the case of strongly pseudoconvex domains. This is not understood yet for domains with squeezing function converging to one at the boundary. This is partly connected to the visibility condition and one may ask:

\vspace{1mm}
{\sl Question. Does a domain $D$ for which $s_D(z) \rightarrow 1$ when $z \rightarrow \partial D$ satisfy a visibilty property?
}

\vspace{1mm}
Note that there exists an increasing sequence of bounded convex domains $D_{\nu} \subset \mathbb C^2$, converging to the bidisc $\mathbb D^2$, such that for every $\nu \geq 1$, $(D_{\nu},K_{D_{\nu}})$ is Gromov hyperbolic, whereas $(\mathbb D^2,K_{\mathbb D^2})$ is not Gromov hyperbolic. One can take $D_{\nu} : = \{z_1,z_2) \in \mathbb C^2/\ |z_1|^2 +|z_2|^{2\nu} < 1\}$.

There also exists an increasing sequence of bounded convex domains $\Omega_{\nu} \subset \mathbb C^2$, converging to the unit ball, such that for every $\nu \geq 1$, $(\Omega_{\nu},K_{\Omega_{\nu}})$ is not Gromov hyperbolic, whereas $(\mathbb B^2,K_{\mathbb B^2})$ is Gromov hyperbolic. Such domains $\Omega_{\nu}$ can be constructed as polyhedrons and, as in the polydisc case, should not satisfy the visibility property.

These two examples are not surprising due to the characterization of Gromov hyperbolic, bounded convex domains, with smooth boundary, given by A. Zimmer \cite{Z1}: we just considered domains of finite type in the first case and of infinite type in the second one. The precise link between different types of curvature, such as the Gromov hyperbolicity, which refers to an abstract boundary at infinity, and the D'Angelo type, which refers to the Euclidean boundary, is still mysterious as can be seen from the considerations here under.

\vspace{2mm}
\noindent{\bf Equivalence of curvature properties.}

\vspace{2mm}
 We are interested here in understanding the possible links between the Gromov hyperbolicity (metric invariant), the finiteness of the  D'Angelo type (CR invariant) and the negative pinching of the holomorphic bisectional curvature (K\"ahler invariant). The case of pseudoconvex domains is still mysterious, even in complex dimension 2 and the links could be as follows:

\vspace{2mm}
{\sl Conjecture. Let $D$ be a bounded pseudoconvex domain in $\mathbb C^2$, with smooth boundary. Then the following are equivalent:

\begin{enumerate}
\item[(i)] $(D,K_D)$ is Gromov hyperbolic,

\item[(ii)] $D$ is of finite D'Angelo type,

\item[(iii)] $D$ admits a complete K\"ahler metric with holomorphi bisectional curvature negatively pinched near $\partial D$.
\end{enumerate}
}

In $\C^d$, with $d \geq 3$, the situation is more complicated and the above conjecture is probably false for bounded pseudoconvex domains with smooth boundary. Indeed, according to a result of G. Herbort \cite{He} and of J.E. Fornaess - F. Reng \cite{FoRo}, there exists a bounded pseudoconvex domain $D \subset \mathbb C^3$, with smooth boundary, such that the Kobayashi metric is not bi-Lipschitz to any Riemannian metric in $D$. It follows that $D$ does not satisfy a uniform squeezing property according to \cite{Ye} and $D$, given in \cite{FoRo} does not admit any complete K\"ahler metric with negatively pinched holomorphic bisectional curvature, according to a result of D. Wu - S. T. Yau \cite{WY}. Indeed, under such an assumption the Kobayashi metric and the K\"ahler-Einstein metric on $D$ should be bi-Lipschitz. The domain $D$ would be a counterexample to the Conjecture, in higher dimension, if one could prove that the result of Wu - Yau were still valid when replacing ``a complete K\"ahler metric with negatively pinched holomorphic sectional curvature'' by ``a complete K\"ahler metric with negatively pinched holomorphic sectional curvature near the boundary''.

\vspace{1mm}
In the case of convex domains, the following is proved in \cite{BGZ2}:

\begin{theorem}
Let $D \subset \C^d$ be a bounded convex domain, with smooth boundary. We assume that $D$ admits a complete K\"ahler metric with holomorphic bisectional curvature negatively pinched near $\partial D$. Then $D$ is of finite D'Angelo type (and equivalently Gromov hyperbolic).
\end{theorem}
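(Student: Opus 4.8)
The plan is to argue by contradiction, reducing the statement to a curvature rigidity phenomenon on a scaling limit. First I would invoke Zimmer's characterization \cite{Z1}: for a smooth bounded convex domain, $(D,K_D)$ is Gromov hyperbolic if and only if $D$ is of finite D'Angelo type. Hence it suffices to prove finiteness of the type, the two properties then being automatically equivalent (this is the parenthetical assertion). So suppose, for contradiction, that $D$ has a boundary point $p$ of infinite D'Angelo type. For convex domains the D'Angelo type agrees with the line type, so there is a complex line through $p$ whose order of contact with $\partial D$ is infinite.

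The second step is the affine scaling. Following the scaling theory for convex domains (Frankel, Kim--Krantz, and in the Gromov-hyperbolic setting \cite{Z1}), I would take base points $q_j\to p$ along the inner normal and affine automorphisms $\Lambda_j$ of $\C^d$ normalizing $D$ at $q_j$, so that $D_j:=\Lambda_j(D)$ converge in the local Hausdorff sense to an unbounded convex model $D_\infty$. The infinite order of contact at $p$ forces $\partial D_\infty$ to contain a complex affine line $L$; since $\overline{D_\infty}$ is convex and contains the full line $L$, it is invariant under translations along $L$, so $D_\infty$ splits as $\C\times D'$ and in particular contains entire complex lines in its interior. Now I transport the metric: because each $\Lambda_j$ is a biholomorphism of $\C^d$, the push-forward $g_j:=(\Lambda_j)_\ast g$ is a complete K\"ahler metric on $D_j$ with exactly the same holomorphic bisectional curvature bounds $-A\le\,\cdot\,\le -B<0$, now valid on $\Lambda_j(U\cap D)$, where $U$ is the given neighborhood of $\partial D$. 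Since $q_j\to\partial D$, the sets $\Lambda_j(U\cap D)$ exhaust $D_\infty$, so on every compact subset of $D_\infty$ the pinching holds for all large $j$. Extracting a subsequential limit $g_\infty$ of the $g_j$, I would obtain a complete K\"ahler metric on $D_\infty=\C\times D'$ whose holomorphic sectional curvature is $\le -B<0$ everywhere. This is the sought contradiction: by the Ahlfors--Schwarz/Yau Schwarz lemma, an upper bound $-B<0$ on the holomorphic sectional curvature makes $(D_\infty,g_\infty)$ Kobayashi hyperbolic---restricting the inclusion of a complex line to discs of radius $R\to\infty$ and comparing the Poincar\'e metric with $g_\infty$ forces the pulled-back metric to vanish along the line---whereas $\C\times D'$ is visibly not Kobayashi hyperbolic.

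The main obstacle is the extraction of a genuine, non-degenerate limit metric $g_\infty$ in the last step: a priori the rescaled metrics $g_j$ could collapse, and a degenerate limit would contradict nothing. What is needed is bounded geometry for the $g_j$ uniformly up to $\partial D$, i.e.\ a two-sided control that rules out collapse. The two-sided curvature bound $-A\le\,\cdot\,\le -B$ gives $|\mathrm{curv}|$ bounded, and the upper bound already yields $g\lesssim\kappa_D$ (Kobayashi metric) via Ahlfors--Schwarz; the difficulty is a matching lower bound. Here I would exploit convexity decisively: by \cite{Ye} a bounded convex domain is uniformly squeezing, so its Kobayashi metric is comparable to explicit, scaling-covariant model metrics, and one can hope to pin the $g_j$ between fixed multiples of the well-behaved Kobayashi metrics of the $D_j$, using the lower curvature bound together with completeness to preclude directional collapse. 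Carrying out this uniform two-sided comparison up to the boundary---precisely the place where the hypothesis of pinching only \emph{near} $\partial D$ must be fed through convex scaling, since the global Wu--Yau bi-Lipschitz comparison with the K\"ahler--Einstein metric \cite{WY} is available only under pinching everywhere---is the technical heart of the argument, and I expect it to be the step that genuinely uses convexity rather than being formal.
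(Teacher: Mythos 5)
First, a point of order: the paper does not prove this theorem at all --- it is stated with the sentence ``the following is proved in \cite{BGZ2}'' and no argument is given. So there is no internal proof to compare against; your proposal can only be measured against the argument in \cite{BGZ2}, whose broad architecture (reduce to Gromov hyperbolicity via Zimmer's equivalence \cite{Z1}, argue by contradiction through affine rescaling at a point of infinite type, and derive a contradiction from Schwarz-lemma comparisons between the K\"ahler metric and the Kobayashi metric) your sketch does reflect. The opening reductions are fine: the parenthetical equivalence is exactly \cite{Z1}, for convex domains the D'Angelo type is the line type, and the fact that an open convex set whose closure contains an affine complex line splits off a $\C$-factor is standard.

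That said, the proposal is a strategy outline rather than a proof, and the gaps are not only the one you flag. (i) Even granting non-collapse, a two-sided $C^0$ comparison $c\,k_{D_j}\le \sqrt{g_j}\le C\,k_{D_j}$ does not let you conclude that a subsequential limit $g_\infty$ satisfies the curvature bound: curvature is a second-order quantity and does not pass to $C^0_{\mathrm{loc}}$ (or even $C^1_{\mathrm{loc}}$) limits. You would need $C^2_{\mathrm{loc}}$ convergence, which requires elliptic input --- e.g.\ replacing $g$ by the K\"ahler--Einstein metric via Wu--Yau and invoking Cheeger--Gromov-type compactness --- or else you must restructure the argument so that no limit metric is ever taken, applying the Ahlfors--Schwarz lemma directly to the large affine discs sitting inside $D$ near $p$ in the original coordinates. (ii) The rescaling of a smooth convex domain of infinite type is usually only shown to produce a limit whose \emph{boundary contains a non-trivial affine disc}; upgrading to a full affine line in $\overline{D_\infty}$ (which is what your splitting argument needs) requires an extra diagonal/renormalization step that you do not supply. (iii) The localization issue is real and bites twice: the Yau--Royden estimate $\sqrt{g}\lesssim k_D$ needs the curvature bound along the entire image of a candidate extremal disc, which may leave the neighborhood $U$ where the pinching is assumed; and, as you note, the reverse inequality $k_D\lesssim\sqrt{g}$ (the non-collapse direction) is exactly the localized Wu--Yau statement, which is the technical heart of \cite{BGZ2} and is left entirely open here. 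In short: right skeleton, but the three analytic steps that make the theorem true --- localization of the Schwarz-lemma comparisons, non-degeneration of the rescaled metrics, and passage of curvature bounds to the limit --- are either missing or only named.
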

}

\section{The Horosphere boundary}

This construction was introduced in \cite{BG}. Let $\mathcal D$  be the collection of all bounded complete (Kobayashi) hyperbolic domains in $\C^d$. Let $D\in \mathcal D$. 

\begin{definition}
Let $z_0\in D$. A sequence $\{u_n\}\subset D$ is {\sl admissible} if
\begin{enumerate}
\item $\lim_{n\to \infty} K_D(z_0, u_n)=+\infty$,
\item for every $R>0$ the set 
\[
E_{z_0}^D(\{u_n\}, R):=\left\{z\in D: \limsup_{n\to \infty}[K_D(z, u_n)-K_D(z_0, u_n)]<\frac{1}{2}\log R\right\},
\]
is non-empty.
\end{enumerate}
\end{definition}

Two admissible sequences $\{u_n\}, \{v_n\}\subset D$ are {\sl equivalent} provided that, for every $R>0$, there exists $R'>0$ such that
\[
E_{z_0}^D(\{v_n\}, R')\subset E_{z_0}^D(\{u_n\}, R), \quad E_{z_0}^D(\{u_n\}, R')\subset E_{z_0}^D(\{v_n\}, R).
\]
\begin{definition}
The {\sl horosphere boundary} $\partial_HD$ of $D$ is the set of all equivalence classes of admissible sequences. 
\end{definition}

We give on $\overline{D}^H:=D\cup \partial_H D$ a topology which makes $\partial_H D$ an abstract boundary as follows. For every open set $U\subset D$, we define $U^\ast$ to be the union of $U$ and of all $\sigma\in \partial_H D$ such that there exists $\{u_n\}\in \sigma$ so that $E_{z_0}^D(\{u_n\}, R)\subset U$ eventually. The topology of $\overline{D}^H$ is the one generated by open sets $U\subset D$ and $U^\ast$. Changing the base point $z_0$, one obtains an equivalent topology.

In \cite{BG}, it is proved that

\begin{theorem}
Let $\mathcal D$  be the collection of all bounded complete hyperbolic domains in $\C$. Then the horosphere boundary is a faithful abstract boundary suitable for $\mathcal D$.
\end{theorem}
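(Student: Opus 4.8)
The plan is to treat the two assertions—suitability and faithfulness—separately, the first being formal and the second requiring an explicit study of horocycles in the disc.

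\emph{Suitability.} Let $F:D\to D'$ be a biholomorphism between members of $\mathcal D$, fix a base point $z_0\in D$ and set $z_0':=F(z_0)$. Since $F$ is an isometry for the Kobayashi distance, $K_{D'}(F(z),F(w))=K_D(z,w)$ for all $z,w\in D$, and therefore
\[
E_{z_0'}^{D'}(\{F(u_n)\},R)=F\big(E_{z_0}^{D}(\{u_n\},R)\big)
\]
for every sequence $\{u_n\}\subset D$ and every $R>0$. Consequently $F$ sends admissible sequences to admissible sequences and preserves the equivalence relation, so it induces a bijection $F^\ast:\partial_H D\to\partial_H D'$. Because the topology of each horosphere closure is generated by the open sets $U\subset D$ and their companions $U^\ast$, and because $F$ is a homeomorphism of $D$ mapping each $U^\ast$ onto $(F(U))^\ast$, the induced map $F^\ast:\overline{D}^H\to\overline{D'}^H$ restricting to $F$ on $D$ is a homeomorphism. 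This establishes suitability.

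\emph{Faithfulness.} Here $\B^1=\D$ carries the Kobayashi distance equal to the Poincar\'e distance, and the core computation is the Busemann limit. Taking $z_0=0$, a direct calculation using $K_\D(z,w)=\tanh^{-1}\big|\tfrac{z-w}{1-\bar w z}\big|$ and the identity $|1-\bar w z|^2-|z-w|^2=(1-|w|^2)(1-|z|^2)$ gives, for any $w\to\xi\in\partial\D$,
\[
\lim_{w\to\xi}\big[K_\D(z,w)-K_\D(0,w)\big]=\frac{1}{2}\log\frac{|z-\xi|^2}{1-|z|^2}.
\]
Hence the sublevel sets $E_{0}^{\D}(\{u_n\},R)$ are exactly the classical horocycles $\{z: |z-\xi|^2<R(1-|z|^2)\}$, Euclidean discs internally tangent to $\partial\D$ at $\xi$. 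First I would use this to show that a sequence $\{u_n\}$ with $|u_n|\to 1$ is admissible if and only if it converges in the Euclidean sense to a single boundary point $\xi$: if two distinct accumulation points $\xi_1\neq\xi_2$ occurred, the relevant $\limsup$ would equal the maximum of the two Busemann functions, and since the horocycles at $\xi_1$ and at $\xi_2$ become disjoint for small $R$, the set $E_0^\D(\{u_n\},R)$ would be empty, contradicting admissibility. Two admissible sequences are then equivalent precisely when they share the endpoint $\xi$, because the Busemann limit, and hence the nested horocycle family $\{E_0^\D(\{u_n\},R)\}_R$, depends only on $\xi$. This yields a well-defined bijection $\Phi:\partial_H\D\to\partial\D$, $[\{u_n\}]\mapsto\xi$.

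Finally I would promote $\Phi$ to a homeomorphism ${\sf id}_\D^\ast:\overline{\D}^H\to\overline{\D}$ extending the identity on $\D$, which reduces to matching the horosphere topology at the boundary with the Euclidean one. Since for fixed $\xi$ the horocycles shrink to $\{\xi\}$ in the Euclidean sense as $R\to 0$, the sets $U^\ast$ built from them give a neighborhood basis of $\xi$ comparable to the Euclidean neighborhood filter. The main obstacle is exactly this comparison: one must control the Euclidean size and position of horocycles uniformly in $\xi$ and translate convergence of admissible sequences in the abstract sense into Euclidean convergence, establishing that the neighborhoods $U^\ast$ reproduce precisely the Euclidean neighborhoods of each $\xi\in\partial\D$ (both that every Euclidean neighborhood of $\xi$ contains some $U^\ast$ and conversely). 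Once this is done, $\Phi$ and its inverse are continuous, the identity extends to the desired homeomorphism, and the horosphere boundary is faithful for $\mathcal D$.
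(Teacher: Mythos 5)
The paper does not actually prove this statement: it is quoted from \cite{BG}, so there is no internal argument to compare with and your attempt must be judged on its own. Your \emph{suitability} half is correct and is the expected argument: a biholomorphism is a Kobayashi isometry, hence (after moving the base point to $F(z_0)$, which is legitimate by the base-point independence recorded in the paper) it maps admissible sequences to admissible sequences and horospheres to horospheres, and therefore induces a homeomorphism of the horosphere closures. The explicit part of your \emph{faithfulness} half is also correct: the Busemann limit $\tfrac12\log\frac{|z-\xi|^2}{1-|z|^2}$, the identification of $E_0^{\D}(\{u_n\},R)$ with the classical horocycle tangent at $\xi$, the fact that admissibility forces Euclidean convergence to a single boundary point, and the resulting bijection $\Phi:\partial_H\D\to\partial\D$.

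The gap is the last step, which you yourself flag as ``the main obstacle'' and then describe rather than prove; unfortunately it is the entire content of faithfulness, and the comparison you propose does not go through in the form stated. A horocycle is a disc internally \emph{tangent} to $\partial\D$ at $\xi$, so the filter on $\D$ generated by $\{E_0^{\D}(\{u_n\},R)\}_{R>0}$ is strictly finer than the trace of the Euclidean neighborhood filter of $\xi$ in $\overline\D$: writing $z=re^{i\theta}$ and $\xi=1$, one has $\frac{|z-1|^2}{1-|z|^2}\approx \frac{(1-r)^2+\theta^2}{2(1-r)}$, so a sequence with $\theta_k\to 0$ and $1-r_k=o(\theta_k^2)$ converges to $1$ in $\overline\D$ yet escapes every fixed horocycle. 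Consequently, if $U$ is itself a horocycle at $\xi$, the set $U^\ast=U\cup\{\xi\}$ is open in the topology generated by the sets $U$ and $U^\ast$ as described in this paper, but it contains no set of the form $B(\xi,\epsilon)\cap\overline\D$; so the direction ``every $U^\ast$ containing $\xi$ reproduces a Euclidean neighborhood of $\xi$'' fails, and your $\Phi$ is, by your argument, only a continuous bijection from $\overline{\D}^H$ to $\overline\D$, not a homeomorphism. Closing this requires the finer analysis of the horosphere topology actually carried out in \cite{BG} (where the horospheres serve to define the ideal points and the convergence of admissible sequences, and the neighborhood structure at the boundary is handled with more care than ``horocycles shrink to $\xi$''); as written, the heart of the theorem is left unproved.
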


Bounded smooth strongly pseudoconvex domains are model domains for the horosphere boundary. In general, see \cite[Section 7]{BG}, the horosphere boundary is not homemorphic to the Gromov boundary of a domain. 

\section{The Busemann (or horofunctions) boundary} Let $\mathcal D$  be the collection of all bounded complete (Kobayashi) hyperbolic domains in $\C^d$. Let $D\in \mathcal D$. Let $C(D)$ be the space of all continuous functions on $D$ (with real values) endowed with the topology of uniform convergence on compacta. Let $C^\ast(D)$ be the quotient space of $C(D)$ with the subspace of constant functions, endowed with its natural topology. For $f\in C(D)$, let $[f]$ denotes its image in $C^\ast(D)$. 

Let $z_0\in D$. Let $C_{z_0}(D):=\{f\in C(D): f(z_0)=0\}$ endowed with the topology induced by $C(D)$. Then, the map $C^\ast(D)\ni [f]\mapsto f-f(z_0)\in C_{z_0}(D)$ is a homeomorphism. It follows that $\{[f_n]\}\subset C^\ast(D)$ converges to $[f]\in C^\ast(D)$ provided  $\{f_n-f_n(z_0)\}$ converges uniformly on compacta of $D$ to $f-f(z_0)$. 

There is a natural embedding $\iota: D\to C^\ast(D)$ obtained by $\iota(z):=[K_D(z,\cdot)]$. Let $\overline{D}^B:=\overline{\iota(D)}$, where the closure is taken in $C^\ast(D)$, and let $\partial_B D:=\overline{D}^B\setminus\iota(D)$. 

\begin{definition}
The topological space $\partial_B D$ is the {\sl Busemann boundary} of $D$. 
\end{definition}

Note that $\sigma\in \partial_B D$ if there exists a sequence $\{z_n\}\subset D$ such that $\lim_{n\to \infty}[K_D(z_n, \cdot)-K_D(z_0, z_n)]$ converges to $f-f(z_0)$, with $[f]=\sigma$. 

The following result holds:

\begin{theorem}
Let $\mathcal D$  be the collection of all bounded complete hyperbolic domains in ${\C^d}$. Then the Busemann boundary is a faithful abstract boundary suitable for $\mathcal D$.
\end{theorem}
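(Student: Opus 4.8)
The plan is to check the three defining properties in turn: that $\partial_B D$ really is an abstract boundary, that it is suitable for $\mathcal D$, and that it is faithful, i.e. that $\B^d$ is a model domain for it.

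First I would settle the abstract boundary property by showing that $\iota$ is a topological embedding with relatively compact image. Since $D$ is bounded and complete hyperbolic, $(D,K_D)$ is a proper geodesic metric space (local compactness plus completeness, via Hopf--Rinow), so $K_D(z_0,\cdot)$ is proper. For each $z\in D$ the normalized function $w\mapsto K_D(z,w)-K_D(z_0,z)$ is $1$-Lipschitz in $w$ (since $|K_D(z,w)-K_D(z,w')|\le K_D(w,w')$) and, by the triangle inequality, bounded on each compactum by a constant independent of $z$. Arzel\`a--Ascoli then gives that $\iota(D)$ is relatively compact in $C^\ast(D)$, so $\overline{D}^B$ is compact and metrizable. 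Injectivity of $\iota$ follows by a short computation: if $K_D(z,\cdot)-K_D(z',\cdot)\equiv c$, evaluating at $z$ and at $z'$ and using the symmetry of $K_D$ forces $K_D(z,z')=0$, hence $z=z'$. As $\iota$ is continuous and $\overline{D}^B$ is compact, $\iota$ is a homeomorphism onto its image, so the topology of $\overline{D}^B$ restricts to the Euclidean one on $D$, and $\partial_B D$ is an abstract boundary.

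Next I would prove suitability using that a biholomorphism $F\colon D\to D'$ is a $K$-isometry, $K_{D'}(F(z),F(w))=K_D(z,w)$. The pull-back $F^\sharp h:=h\circ F$ is a linear homeomorphism $C(D')\to C(D)$ (compacta of $D$ correspond to compacta of $D'$ under $F$) which preserves constants, hence descends to a homeomorphism $C^\ast(D')\to C^\ast(D)$. The isometry identity gives, for $w\in D$,
\[
\bigl(F^\sharp K_{D'}(F(z),\cdot)\bigr)(w)=K_{D'}(F(z),F(w))=K_D(z,w),
\]
so that the inverse homeomorphism sends $\iota(z)\mapsto \iota'(F(z))$. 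Being a homeomorphism of the ambient quotient spaces, it carries $\overline{\iota(D)}=\overline{D}^B$ homeomorphically onto $\overline{\iota'(D')}=\overline{D'}^B$; its restriction is the required $F^\ast$, which equals $F$ on $D$ under the identifications $\iota,\iota'$.

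Finally, for faithfulness I would compute the Busemann functions of the ball explicitly. Using the closed form of $K_{\B^d}$, one finds for $\xi\in\partial\B^d$ that
\[
h_\xi(z):=\lim_{w\to\xi}\bigl[K_{\B^d}(z,w)-K_{\B^d}(0,w)\bigr]=\frac{1}{2}\log\frac{|1-\langle z,\xi\rangle|^2}{1-\|z\|^2}.
\]
The assignment $\xi\mapsto[h_\xi]$ is well defined into $\partial_B\B^d$; it is injective because for $\xi\neq\xi'$ one has $\langle\xi,\xi'\rangle\neq 1$, so letting $z\to\xi$ radially shows $h_\xi-h_{\xi'}$ is non-constant. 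Surjectivity holds since any boundary point of $\overline{\B^d}^B$ is a limit of $\iota(z_n)$ with $z_n$ compactly divergent, hence (after passing to a subsequence) with $z_n\to\xi\in\partial\B^d$, and the explicit formula shows the limit depends only on $\xi$. As $\partial\B^d$ is compact and $\partial_B\B^d$ Hausdorff, this continuous bijection is a homeomorphism; gluing it with $\iota$ on the interior yields a continuous bijection $\overline{\B^d}\to\overline{\B^d}^B$ between compact Hausdorff spaces, hence a homeomorphism, whose inverse is the desired extension ${\sf id}_{\B^d}^\ast$.

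The main obstacle is the last paragraph: the explicit identification of the ball horofunctions and, above all, the verification that the boundary parametrization is continuous \emph{in both directions}. Concretely, one must show that Euclidean convergence $z_n\to\xi$ forces $K_{\B^d}(z_n,\cdot)-K_{\B^d}(0,z_n)\to h_\xi$ uniformly on compacta, and that $\xi\mapsto h_\xi$ is continuous for the uniform-on-compacta topology; both require uniform estimates extracted from the closed-form expression for the Kobayashi distance on $\B^d$. The other two properties are comparatively formal, resting on properness together with Arzel\`a--Ascoli and on the functoriality of the pull-back.
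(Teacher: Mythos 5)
The paper states this theorem without proof, so there is nothing to compare against line by line; your three-step strategy (show $\iota$ is an embedding with relatively compact image, transport everything by the pull-back $F^\sharp$, compute the horofunctions of the ball explicitly) is the natural one. The suitability argument via $F^\sharp$ is complete and correct, and the formula $h_\xi(z)=\tfrac12\log\frac{|1-\langle z,\xi\rangle|^2}{1-\|z\|^2}$ together with your injectivity and surjectivity arguments is the right way to handle faithfulness; the locally uniform convergence you defer does follow from the closed form of $K_{\B^d}$, so flagging it as routine is acceptable.

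There is, however, one step whose justification is genuinely invalid: \emph{``as $\iota$ is continuous and $\overline{D}^B$ is compact, $\iota$ is a homeomorphism onto its image.''} A continuous injection of a non-compact space into a compact Hausdorff space need not be an embedding (consider $[0,2\pi)\to S^1$); compactness of the target gives you nothing here precisely because $D$ is not compact. The conclusion is true, but it must be derived from properness of $K_D(z_0,\cdot)$. Concretely, suppose $\iota(z_n)\to\iota(z)$ in $C^\ast(D)$ but $z_n\not\to z$. Either some subsequence of $\{z_n\}$ converges to a point $z'\in D$ with $z'\neq z$, which contradicts the injectivity you already established; or some subsequence is compactly divergent, i.e.\ $K_D(z_n,z_0)\to\infty$. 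In the latter case, take geodesic segments $\gamma_n$ from $z_0$ to $z_n$, extract by Arzel\`a--Ascoli a geodesic ray $\gamma$, and observe that for every $t\geq 0$ one gets $\lim_n\bigl[K_D(z_n,\gamma(t))-K_D(z_n,z_0)\bigr]\leq -t$, so the limit function is unbounded below; but $K_D(z,\cdot)-K_D(z,z_0)\geq -K_D(z,z_0)$ is bounded below, a contradiction. This is the only place in the whole argument where completeness of the Kobayashi distance is actually used to separate $\iota(D)$ from $\partial_B D$ and to guarantee that the topology of $\overline{D}^B$ restricts to the Euclidean one on $D$, so it cannot be waved away.
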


Given a geodesic ray $\gamma:[0,+\infty)\to D$, one can define an element of $\partial_B D$ by considering the so-called {\sl Busemann function} (see, {\sl e.g.}, \cite{BH})
\[
f_\gamma(z):=\lim_{t\to +\infty}(K_D(\gamma(t), z)-t),
\]
which defines a point of $\partial_B D$. Note that if $\gamma, \eta$ are two asymptotic geodesic rays, then $[f_\gamma]=[f_\eta]$. Hence, there is a well defined map, the {\sl Busemann map}, $B:\partial_G D \to \partial_B D$.

A bounded complete hyperbolic domain  has {\sl approaching geodesics} if every two asymptotic geodesic rays $\gamma, \eta$ starting from the same base point has the property that
\[
\lim_{t\to +\infty}K_D(\gamma(t), \eta(t))=0.
\]

In \cite[Thm. 1]{AFGG} it is proved

\begin{theorem}\label{thm:AFGG}
Let $D\subset \C^d$ be a bounded complete hyperbolic domain with approaching geodesics.  If $D$ is Gromov hyperbolic then the Busemann map extends the identity map from $D$ to $D$ to a homeomorphism from $\overline{D}^G$ to $\overline{D}^B$. 
\end{theorem}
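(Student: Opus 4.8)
The plan is to realize the stated extension as a continuous bijection between two compact Hausdorff spaces and then invoke the elementary fact that such a map is automatically a homeomorphism; this spares a direct proof that the inverse is continuous. First I would record that, by the Hopf--Rinow theorem, $(D,K_D)$ is a proper geodesic space, so that Gromov hyperbolicity is equivalent to the usual $\delta$-thin triangles condition. I would then check that both bordifications are compact and metrizable: the normalized representatives $K_D(z,\cdot)-K_D(z_0,z)$ are $1$-Lipschitz and vanish at $z_0$, so by Arzel\`a--Ascoli the set $\overline{D}^B=\overline{\iota(D)}\subset C_{z_0}(D)$ is compact, and metrizable as a closed subset of a metrizable space, while $\overline{D}^G$ is compact metrizable by the standard theory of the Gromov bordification of a proper geodesic hyperbolic space \cite{BH}. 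After identifying $D$ with $\iota(D)\subset\overline{D}^B$, the candidate map $\Psi\colon\overline{D}^G\to\overline{D}^B$ is the identity on $D$ and the Busemann map $B$ on $\partial_G D$; it then suffices to prove that $\Psi$ is a continuous bijection.

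For injectivity of $B$ I would use that $f_\gamma(z_0)=f_\eta(z_0)=0$, so that $[f_\gamma]=[f_\eta]$ forces $f_\gamma=f_\eta$. Since $f_\gamma(\gamma(t))=-t$, this gives $f_\eta(\gamma(t))=-t\to-\infty$, and a thin-triangle estimate for the Gromov product $(\gamma(t)\mid\eta(s))_{z_0}$ then shows it tends to $+\infty$, so $\gamma$ and $\eta$ have the same endpoint at infinity and are asymptotic, i.e.\ $[\gamma]=[\eta]$. Surjectivity is the heart of the matter. Given $\sigma=[h]\in\partial_B D$, I realize $h$ as a limit of $K_D(z_n,\cdot)-K_D(z_0,z_n)$ with $\{z_n\}$ compactly divergent, join $z_0$ to $z_n$ by geodesics $c_n$, and extract a subsequential limit, uniform on compacta, which is a geodesic ray $\gamma$. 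Comparing a point $c_n(t)$ of the side $[z_0,z_n]$ with the side $[z,z_n]$ of the triangle $z_0,z,z_n$ and using $\delta$-thinness yields, for every fixed $z$, the two-sided bound
\[
f_\gamma(z)-2\delta\ \le\ h(z)\ \le\ f_\gamma(z),
\]
together with $h(\gamma(t))=f_\gamma(\gamma(t))=-t$.

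The main obstacle is to upgrade this bounded-distance estimate to the exact equality $h=f_\gamma$, which is what makes $B$ surjective and what fails for general hyperbolic spaces, where the horofunction boundary is strictly larger than the Gromov boundary. This is precisely where the hypothesis of approaching geodesics is indispensable: it forces asymptotic rays to merge, which is exactly what is needed to turn the $\delta$-fellow-traveling of the geodesics $[z_0,z_n]$ and $[z,z_n]$ into genuine convergence in the limit, collapsing the residual slack $2\delta$ in the lower bound to $0$ and giving $h(z)=f_\gamma(z)$ for every $z$. I expect the careful bookkeeping of this merging, carried out uniformly enough in $n$ to pass to the limit, to be the delicate part of the argument.

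Once $B$ is known to be a bijection, continuity of $\Psi$ reduces to the same circle of estimates. If $z_n\to\sigma=[\gamma]$ in the Gromov topology, then the geodesics $c(z_n)$ converge uniformly on compacta to a representative of $\sigma$, so the surjectivity computation gives $K_D(z_n,\cdot)-K_D(z_0,z_n)\to f_\gamma$ uniformly on compacta, i.e.\ $\iota(z_n)\to B(\sigma)$; and if $\sigma_n\to\sigma$ in $\partial_G D$, the convergence $\gamma_n\to\gamma$ of representatives uniformly on compacta together with the uniform Busemann estimate yields $f_{\gamma_n}\to f_\gamma$, hence $B(\sigma_n)\to B(\sigma)$. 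Since $\Psi$ is then a continuous bijection from the compact space $\overline{D}^G$ onto the Hausdorff space $\overline{D}^B$, it is a homeomorphism, and by construction it restricts to the identity on $D$.
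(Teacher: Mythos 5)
The paper does not actually prove this statement: it is quoted verbatim from \cite[Thm.~1]{AFGG}, so your attempt can only be measured against that reference. Your overall architecture is the standard and correct one (both bordifications are compact metrizable, so it suffices to produce a continuous bijection; injectivity via $f_\gamma(\gamma(t))=-t$ and a Gromov-product estimate; surjectivity by taking limits of geodesic segments $[z_0,z_n]$), and you correctly locate where the hypothesis of approaching geodesics must enter. But that is also where the proof stops being a proof. The two-sided bound $f_\gamma(z)-2\delta\le h(z)\le f_\gamma(z)$ holds in \emph{every} proper geodesic Gromov hyperbolic space, and in that generality the conclusion is false --- the paper itself records the example from \cite{AFGG} of a proper geodesic Gromov hyperbolic space in $\R^2$ whose Gromov and Busemann boundaries are not homeomorphic. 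So the entire content of the theorem is concentrated in the step you describe as ``collapsing the residual slack $2\delta$ to $0$'' and defer as ``careful bookkeeping.'' Asserting that approaching geodesics ``forces asymptotic rays to merge'' is not an argument: you must exhibit, for fixed $z$, two asymptotic rays to which the condition applies and show how their merging converts the fellow-traveling of $[z_0,z_n]$ and $[z,z_n]$ into the exact identity $h=f_\gamma$. Note also that the condition as defined in the paper concerns rays issued from the \emph{same} base point, whereas the natural comparison here is between a ray from $z_0$ and a ray from $z$; some base-point-independence statement has to be proved before the hypothesis is even applicable.

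The same unproved ingredient reappears in your continuity argument. The claim that $\gamma_n\to\gamma$ uniformly on compacta implies $f_{\gamma_n}\to f_{\gamma}$ requires the convergence $K_D(\gamma_n(t),z)-t\to f_{\gamma_n}(z)$ (as $t\to\infty$) to be uniform in $n$; monotonicity gives only $\limsup_n f_{\gamma_n}(z)\le f_\gamma(z)$ for free, and the reverse inequality again needs a quantitative input from hyperbolicity plus approaching geodesics. So the skeleton is right, the compactness bookend is right, but the theorem itself --- the passage from ``bounded difference'' to ``equality'' --- is missing, and it is precisely the part for which one must consult \cite{AFGG}.
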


{
As mentioned in Section 5, the horosphere closure and the Gromov closure of a bounded complete hyperbolic domain in $\C^d$ may not be homeomorphic. This is for instance the case for the polydisc. Although the Gromov closure can be defined for non Gromov hyperbolic spaces, it seems more accurate, in view of Theorem~\ref{thm:AFGG}, to consider it for Gromov hyperbolic domains in $\C^d$. In \cite{AFGG}, there is an example of a proper geodesic Gromov hyperbolic metric space in $\R^2$ for which the Gromov and the Busemann boundaries are not homeomorphic. It is however not clear that such a phenomenon can occur for complete hyperbolic bounded domains in $\C^d$ endowed with the Kobayashi metric. So the following question is relevant:

{\sl
Let $D$ be a bounded complete (Kobayashi) hyperbolic domain in $\C^d$. Are $\overline{D}^G$, $\overline{D}^H$ and $\overline{D}^B$ all homeomorphic? If not, is the approaching geodesic condition necessary for these abstract boundaries to be homeomorphic ?
}
}
\begin{appendix}
\section{D'Addezio's Lemmas}\label{Daddezio}

The results in this section have been proved by Damiano D'Addezio in his Master Thesis \cite{Dad}.

Let $\Omega\subset \C^d$ be a domain, $k_\Omega$ the infinitesimal Kobayashi (pseudo)metric and, for $z\in \Omega$ denote by $\delta_\Omega(z):=\inf_{\zeta\in\C^d\setminus \Omega}\|z-\zeta\|$. Let

\[
M_\Omega(r):=\sup\left\{\frac{1}{k_\Omega(z;v)}: \delta_\Omega(z)<r, \|v\|=1 \right\}.
\]

\begin{lemma}\label{Lem:Dad}
Let $\Omega\subset \C^d$ be a complete hyperbolic bounded domain. If $\partial \Omega$ does not contain non-trivial analytic discs then
\[
\lim_{r\to 0^+}M_\Omega(r)=0.
\]
\end{lemma}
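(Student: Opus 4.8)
The plan is to argue by contradiction, manufacturing a non-trivial analytic disc in $\partial\Omega$ out of a hypothetical failure of the limit. First note that $r\mapsto M_\Omega(r)$ is non-decreasing, so $L:=\lim_{r\to 0^+}M_\Omega(r)=\inf_{r>0}M_\Omega(r)$ exists in $[0,+\infty)$; suppose $L>0$. For each $n$ the inequality $M_\Omega(1/n)\ge L$ lets me pick $z_n\in\Omega$ with $\delta_\Omega(z_n)<1/n$ and a unit vector $v_n$ with $1/k_\Omega(z_n;v_n)>L/2$. By the definition of the Kobayashi--Royden metric there is then a holomorphic disc $f_n:\D\to\Omega$ with $f_n(0)=z_n$ and $f_n'(0)=\lambda_n v_n$, where $\lambda_n>L/2$. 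Since $\Omega$ is bounded, a Cauchy estimate bounds $\lambda_n=\|f_n'(0)\|$ from above, so after passing to a subsequence I may assume $z_n\to p$, $v_n\to v$ (a unit vector), and $\lambda_n\to\lambda\ge L/2$; because $\delta_\Omega(z_n)\to 0$ the point $p$ lies in $\partial\Omega$. As the $f_n$ take values in the bounded set $\Omega$, Montel's theorem gives, along a further subsequence, $f_n\to f$ locally uniformly, with $f:\D\to\overline\Omega$ holomorphic, $f(0)=p\in\partial\Omega$, and $f'(0)=\lambda v\neq0$; in particular $f$ is non-constant.

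The heart of the matter, and the step I expect to be the main obstacle, is to show that $f(\D)\subset\partial\Omega$, i.e.\ that the limit disc degenerates entirely onto the boundary. This cannot follow from soft complex analysis alone: a non-constant holomorphic map $\D\to\overline\Omega$ sending $0$ to $\partial\Omega$ may well meet $\Omega$ (think of the inclusion of the punctured disc into $\overline{\D}$), so the completeness of the Kobayashi distance must genuinely be used. Let $K_\Omega$ denote the Kobayashi distance and fix $q\in\Omega$. For any $w\in\D$ the distance-decreasing property of $f_n$ gives
\[
K_\Omega(z_n,f_n(w))=K_\Omega(f_n(0),f_n(w))\le K_\D(0,w)=:c_w<+\infty .
\]
On the other hand, completeness of $(\Omega,K_\Omega)$ forces $K_\Omega(q,z_n)\to+\infty$: by Hopf--Rinow the closed $K_\Omega$-balls are compact, so a bounded subsequence of $K_\Omega(q,z_n)$ would confine $z_n$ to a compact subset of $\Omega$ (recall that on a bounded domain $K_\Omega$ induces the Euclidean topology), contradicting $z_n\to p\in\partial\Omega$. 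Now suppose $f(w)\in\Omega$ for some $w$. Then $f_n(w)\to f(w)\in\Omega$ and, by continuity of $K_\Omega$ on $\Omega\times\Omega$, $K_\Omega(q,f_n(w))\to K_\Omega(q,f(w))<+\infty$; combined with the triangle inequality
\[
K_\Omega(q,z_n)\le K_\Omega(q,f_n(w))+K_\Omega(f_n(w),z_n)\le K_\Omega(q,f_n(w))+c_w
\]
this keeps $K_\Omega(q,z_n)$ bounded, a contradiction. Hence $f(w)\in\partial\Omega$ for every $w\in\D$.

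This produces a non-constant holomorphic map $f:\D\to\partial\Omega$, that is, a non-trivial analytic disc contained in $\partial\Omega$, contradicting the hypothesis. Therefore $L=0$, which is exactly $\lim_{r\to 0^+}M_\Omega(r)=0$. I would stress that the no-disc hypothesis enters only in this last line: every prior step holds for an arbitrary bounded complete hyperbolic domain, and the real work is the completeness argument of the second paragraph, which is what converts the a priori lower bound $\lambda_n>L/2$ on the speeds of the approximating discs into an honest boundary disc.
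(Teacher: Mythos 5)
Your proof is correct and follows essentially the same route as the paper's: argue by contradiction, extract extremal Kobayashi discs $f_n$ with $\|f_n'(0)\|$ bounded below, pass to a normal limit $f:\D\to\overline{\Omega}$ with $f'(0)\neq 0$, and show the limit disc degenerates onto $\partial\Omega$. The only difference is in that last step, where the paper simply invokes tautness of $\Omega$ (a standard consequence of complete hyperbolicity), whereas you give a self-contained completeness/triangle-inequality argument that amounts to proving the needed instance of tautness directly.
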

\begin{proof}
First note that $r\mapsto M_\Omega(r)$ is monotone decreasing. Suppose by contradiction that the statement is false. Then there exists $C>0$ such that $\lim_{r\to 0^+}M_\Omega(r)=C$. Therefore, we can find a sequence of positive real numbers $\{r_n\}$ converging to $0$, points $z_n\in \Omega$ and vectors $v_n\in \C^d$, $\|v_n\|=1$ such that $\delta_\Omega(z_n)\leq r_n$ and 
\[
\frac{1}{k_\Omega(z_n, v_n)}>C-\frac{1}{n}.
\]
By the very definition of $k_\Omega$, there exists a holomorphic function $\varphi_n:\D \to \Omega$ such that $\varphi_n(0)=z_n$ and
\[
\|\varphi_n'(0)\|\geq C-\frac{1}{n}.
\]
Since $\overline{\Omega}$ is compact, up to subsequences, we might assume that there exist $p\in \partial \Omega$ and $\varphi:\D \to \overline{\Omega}$ such that $\lim_{n\to \infty}z_n=p$ and $\{\varphi_n\}$ converges uniformly on compact to $\varphi$. Since $\Omega$ is taut, it follows that $\varphi(\D)\subset \partial \Omega$ and $\|\varphi'(0)\|\geq C$. Therefore, $\varphi(\D)$ is a non-trivial analytic disc in $\partial \Omega$, contradiction.
\end{proof}

\begin{lemma}[D'Addezio]\label{Prop:Dad}
Let $\Omega\subset \C^d$ be a complete hyperbolic bounded domain. Assume that $\partial \Omega$ does not contain non-trivial analytic discs. Then, if $\{z_n\}, \{w_n\}\subset \Omega$ are two sequences such that $\lim_{n\to \infty}z_n=p$, $\lim_{n\to \infty}w_n=q$ and 
\[
\sup_n K_\Omega(z_n, w_n)<+\infty
\]
it follows that $p=q$.
\end{lemma}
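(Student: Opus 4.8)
The plan is to argue by contradiction, assuming $p \neq q$, and to feed on Lemma~\ref{Lem:Dad}, which under the no-analytic-disc hypothesis says that the infinitesimal Kobayashi metric blows up near $\partial\Omega$. Concretely, for every $r_0>0$, every $z$ with $\delta_\Omega(z)<r_0$ and every unit vector $v$ one has $k_\Omega(z;v)\geq 1/M_\Omega(r_0)$ (since such $z$ enter the supremum defining $M_\Omega(r_0)$), with $M_\Omega(r_0)\to 0$ as $r_0\to 0^+$. Since $\Omega$ is complete hyperbolic, I would join $z_n$ and $w_n$ by a geodesic segment $\gamma_n$ (Hopf--Rinow), whose Kobayashi length equals $K_\Omega(z_n,w_n)\leq C:=\sup_n K_\Omega(z_n,w_n)<+\infty$.

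Set $\rho:=|p-q|/2>0$, so $|z_n-w_n|\geq \rho$ for all large $n$, and fix $r_0>0$ small enough that $M_\Omega(r_0)<\rho/(2C)$, which is possible by Lemma~\ref{Lem:Dad}. For each large $n$ I would then split according to the behaviour of $\delta_\Omega$ along $\gamma_n$. If $\gamma_n$ stays inside the shell $\{\delta_\Omega<r_0\}$, then integrating the pointwise bound $k_\Omega(\gamma_n(t);\gamma_n'(t))\geq |\gamma_n'(t)|/M_\Omega(r_0)$ shows that the Kobayashi length of $\gamma_n$ is at least $(\text{Euclidean length})/M_\Omega(r_0)\geq |z_n-w_n|/M_\Omega(r_0)\geq \rho/M_\Omega(r_0)>2C$, contradicting that this length equals $K_\Omega(z_n,w_n)\leq C$. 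Hence $\gamma_n$ must reach some point $m_n$ with $\delta_\Omega(m_n)\geq r_0$.

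The remaining case then yields a contradiction through completeness. The set $K_{r_0}:=\{z\in\Omega:\delta_\Omega(z)\geq r_0\}$ is a compact subset of $\Omega$ (as $\Omega$ is bounded), hence of finite Kobayashi diameter; fixing $m^\ast\in K_{r_0}$ and combining $K_\Omega(z_n,m_n)\leq \mathrm{length}(\gamma_n)\leq C$ with the triangle inequality gives $K_\Omega(z_n,m^\ast)\leq C+\mathrm{diam}_{K_\Omega}(K_{r_0})=:C'$ for all large $n$. Since $\Omega$ is complete hyperbolic, closed Kobayashi balls are compact (equivalently $K_\Omega(m^\ast,\cdot)$ is proper), so $\{z_n\}$ would lie in a relatively compact subset of $\Omega$, contradicting $z_n\to p\in\partial\Omega$. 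Thus $p\neq q$ is untenable and $p=q$.

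The no-analytic-disc hypothesis enters only through Lemma~\ref{Lem:Dad} (the metric blow-up), while completeness is used twice: to produce the geodesic segments and to run the properness argument. The step I expect to require the most care is the dichotomy itself, namely checking that the two alternatives (the geodesic hugging $\partial\Omega$ versus penetrating the compact core $K_{r_0}$) are genuinely exhaustive and that each is incompatible with $\sup_n K_\Omega(z_n,w_n)<\infty$ once $p\neq q$; in particular one must verify that the ``hugging'' alternative really forces a definite Euclidean-to-Kobayashi length inflation, which is exactly the quantitative content of Lemma~\ref{Lem:Dad}.
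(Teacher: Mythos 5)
Your proof is correct and follows essentially the same route as the paper's: the same dichotomy (the geodesic from $z_n$ to $w_n$ either hugs the boundary, where Lemma~\ref{Lem:Dad} forces its Kobayashi length to exceed $c/M_\Omega(r)$, or it meets a fixed compact set, which contradicts completeness since $z_n\to\partial\Omega$). Your quantitative choice of a single $r_0$ with $M_\Omega(r_0)<\rho/(2C)$ is a slightly cleaner packaging of the paper's limiting argument, but the substance is identical.
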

\begin{proof}
We argue by contradiction. Assume $p\neq q$. By Lemma~\ref{Lem:Dad}, $\lim_{r\to 0^+}M_\Omega(r)=0$. Let $\gamma_n:[a_n, b_n]\to \Omega$ be a geodesic for $K_\Omega$ such that $\gamma_n(a_n)=z_n$ and $\gamma_n(b_n)=w_n$. 

We claim that there exists a compact set $K\subset \Omega$ such that $\gamma_n([a_n, b_n])\cap K\neq \emptyset$ for all $n$.

Assume the claim is false. Then, for every $r>0$ there exists $n_r$ such that $\delta_\Omega(\gamma_n(t))<r$ for all $t\in [a_n, b_n]$ and $n\geq n_r$. Moreover, since $p\neq q$, we can assume there exists $c>0$ such that $\|z_n-w_n\|>c$ for all $n$. Hence,
\begin{equation*}
\begin{split}
K_\Omega(z_n, w_n)&=\int_{a_n}^{b_n}k_\Omega(\gamma_n(t); \gamma_n'(t))dt=
\int_{a_n}^{b_n}\|\gamma_n'(t)\| k_\Omega(\gamma_n(t); \frac{ \gamma_n'(t)}{\| \gamma_n'(t)\|})dt\\&>\frac{c}{M_\Omega(r)}.
\end{split}
\end{equation*}
Since $\lim_{r \to 0^+}\frac{c}{M_\Omega(r)}=\infty$, it follows that $K_\Omega(z_n, w_n) \to \infty$  as $n\to \infty$, contradiction.

Therefore, there is a compact set $K\subset \Omega$ such that for every $n$ there exists $t_n\in [a_n, b_n]$ so that $\gamma_n(t_n)\in K$. Let $\zeta_0\in \Omega$. We have,
\begin{equation*}
\begin{split}
K_\Omega(z_n, w_n)&=K_\Omega(z_n, \gamma_n(t_n))+K_\Omega(w_n, \gamma_n(t_n))\\&\geq K_\Omega(z_n, \zeta_0)+K_\Omega(w_n, \zeta_0)-2\max\{K_\Omega(\xi, \zeta_0): \xi\in K\}.
\end{split}
\end{equation*}
Since $K_\Omega$ is complete, it follows that $\lim_{n\to \infty}K_\Omega(z_n, w_n)=\infty$,  a contradiction.
\end{proof}

Given a complete hyperbolic domain $D\subset \C^d$, we say that a geodesic ray $\gamma$ of $D$ {\sl lands} if there exists a point $p\in \partial D$ such that $\lim_{t\to +\infty}\gamma(t)=p$. 

As a corollary of the previous lemma, we have the following extension result (cfr. with Abate's Lindel\"of Theorem \cite{Aba}):

\begin{proposition}
Let $\Omega\subset \C^d$ be a complete hyperbolic bounded domain without analytic discs on the boundary. Assume that all geodesic rays in $\Omega$ starting from a point $w_0\in \Omega$ are landing. Let $D\subset \C^d$ be a bounded strongly pseudoconvex domain with $C^2$ boundary. Let $F:D\to \Omega$ be a biholomorphism. Then $F$ has non-tangential limit at every $p\in \partial \B^d$. 
\end{proposition}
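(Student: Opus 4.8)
The plan is to combine the landing of a single geodesic ray with D'Addezio's Lemma~\ref{Prop:Dad}. Fix $p\in\partial D$ and put $x_0:=F^{-1}(w_0)\in D$. Since $D$ is a $C^2$-smooth bounded strongly pseudoconvex domain, by \cite{BB} it is Gromov hyperbolic and a model domain for the Gromov boundary; in a proper geodesic Gromov hyperbolic space every boundary point---which, for a model domain, coincides with the Euclidean boundary---is the endpoint of a geodesic ray issuing from any prescribed base point. First I would select a geodesic ray $\gamma:[0,+\infty)\to D$ with $\gamma(0)=x_0$ and $\lim_{t\to+\infty}\gamma(t)=p$. As $F$ is an isometry for the Kobayashi distance, $\widetilde\gamma:=F\circ\gamma$ is a geodesic ray of $\Omega$ issuing from $w_0$, hence, by hypothesis, it lands: there is $q\in\partial\Omega$ with $\lim_{t\to+\infty}\widetilde\gamma(t)=q$. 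This $q$ will be the candidate non-tangential limit of $F$ at $p$.

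The heart of the matter is to prove that $F(z_n)\to q$ for every sequence $\{z_n\}\subset D$ converging to $p$ non-tangentially. The crucial geometric input, which I expect to be the main obstacle, is that on a $C^2$-smooth strongly pseudoconvex domain the non-tangential (Stolz) approach regions at $p$ sit inside a bounded Kobayashi neighbourhood of any geodesic ray landing at $p$: concretely, there should exist parameters $t_n\to+\infty$ and a constant $C>0$, depending only on the aperture of the cone, with $\sup_n K_D(z_n,\gamma(t_n))\leq C$. This rests on the sharp boundary estimates for the Kobayashi metric on strongly pseudoconvex domains and on the comparison with the model geometry of the ball, in the spirit of \cite{BB} and of Abate's work \cite{Aba}; pinning down the correct parameters $t_n$ and controlling the transverse distance to $\gamma$ is the only genuinely technical point, the rest being soft.

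Granting this estimate, set $a_n:=F(z_n)$ and $b_n:=\widetilde\gamma(t_n)=F(\gamma(t_n))$. Then $b_n\to q$ and, $F$ being an isometry, $K_\Omega(a_n,b_n)=K_D(z_n,\gamma(t_n))\leq C$ for all $n$. Since $\overline{\Omega}$ is compact, each subsequence of $\{a_n\}$ has a convergent sub-subsequence; its limit cannot lie in $\Omega$, since otherwise applying $F^{-1}$ would pull back a subsequence of $\{z_n\}$ to an interior point of $D$, against $z_n\to p\in\partial D$. Thus every limit point of $\{a_n\}$ belongs to $\partial\Omega$, and D'Addezio's Lemma~\ref{Prop:Dad}, applied to the sequences $\{a_{n_k}\}$ and $\{b_{n_k}\}$ (whose mutual Kobayashi distance is bounded by $C$), identifies that limit point with $q$. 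As every convergent subsequence of $\{F(z_n)\}$ has limit $q$, we obtain $F(z_n)\to q$, and since the non-tangential sequence was arbitrary, $F$ has non-tangential limit $q$ at $p$.
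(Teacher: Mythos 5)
Your argument is essentially the paper's own proof: choose a geodesic ray of $D$ landing at $p$, push it forward to a geodesic ray of $\Omega$ landing at some $q$ by hypothesis, use the known fact that non-tangential approach regions of a $C^2$ strongly pseudoconvex domain lie within bounded Kobayashi distance of that ray (the paper cites \cite{BG} for this, just as you invoke the boundary estimates), and conclude with D'Addezio's Lemma~\ref{Prop:Dad}. Your extra care in basing the ray at $F^{-1}(w_0)$, so that its image actually issues from $w_0$ as the hypothesis requires, is a welcome refinement the paper leaves implicit, but the route is the same.
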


\begin{proof}
Let $\gamma$ be a geodesic ray in $D$. Then (see, {\sl e.g.} \cite{BG}), $\gamma$ lands non-tangentially to a point $p\in \partial D$. Moreover, every sequence in $D$ which converges to $p$ stays at finite Kobayashi distance from $\gamma$. 

Take $p\in \partial D$ and let $\gamma$ be a geodesic ray in $D$ converging to $p$ (this exists by \cite{BB}). 

Since $F$ is a biholomorphism, then $F\circ \gamma$ is a geodesic ray in $\Omega$ and hence, by hypothesis, it lands to some point $q\in \partial D$. 

Now, if $\{z_n\}\subset D$ is a sequence converging non-tangentially to $p$, then $\{z_n\}$ stays at finite Kobayashi distance from $\gamma$ and hence, since $F$ is an isometry for the Kobayashi distance, $\{F(z_n)\}$ stays at finite distance from $F\circ \gamma$. By Lemma~\ref{Prop:Dad}, $\{F(z_n)\}$ converges to $q$.
\end{proof}

Note that, if the domain $\Omega$ in the previous proposition were visible, then $\Omega$ has no analytic discs on the boundary and all geodesic rays in $\Omega$ are landing and thus the previous result applies. However, in such a situation, by \cite[Prop. 3.6]{BNT}, $F$ not only has non-tangential limit, but also extends continuously to the boundary of $D$. An example of an application the previous result without having continuous extension is given by a Riemann map from the unit disc whose image is starlike with respect to $0$ but the boundary is not locally connected (see, {\sl e.g.}, \cite{BCD}). 

In higher dimension, L. Lempert \cite{Le5} proved that every biholomorphism from two bounded starlike domains with real analytic boundaries extends as a homeomorphism on the closure of the domains.
\end{appendix}

{}

\end{document}